\documentclass{article}

\usepackage{inputenc}[utf8]
\usepackage[OT1]{fontenc}
\usepackage{amsmath,amssymb}
\usepackage{amsthm}
\usepackage[normalem]{ulem}
\usepackage[pdftex,bookmarks=true]{hyperref}
\usepackage[noadjust]{cite}
\usepackage[capitalise, noabbrev, nameinlink]{cleveref}
\usepackage{enumitem}
\usepackage{xcolor}
\usepackage{tikz}
\tikzset{
every node/.style={draw, circle, inner sep=2pt}
}

\usepackage{soul}
\usepackage{cancel}

\newtheorem{theorem}{Theorem}[section]
\newtheorem{lemma}[theorem]{Lemma}
\newtheorem{proposition}[theorem]{Proposition}
\newtheorem{corollary}[theorem]{Corollary}

\theoremstyle{definition}
\newtheorem{definition}[theorem]{Definition}

\newtheorem{remark}[theorem]{Remark}
\newtheorem{example}[theorem]{Example}

\definecolor{mydarkgreen}{RGB}{0,100,0} % RGB values

\newcommand{\trans}{^\top}
\newcommand{\adj}{^{\rm adj}}
\newcommand{\cof}{^{\rm cof}}
\newcommand{\inp}[2]{\left\langle#1,#2\right\rangle}

\newcommand{\bzero}{\mathbf{0}}
\newcommand{\bone}{\mathbf{1}}
\newcommand{\ba}{\mathbf{a}}
\newcommand{\bb}{\mathbf{b}}

\newcommand{\be}{\mathbf{e}}

\newcommand{\bu}{\mathbf{u}}

\newcommand{\bw}{\mathbf{w}}
\newcommand{\tr}{\operatorname{tr}}
\newcommand{\nul}{\operatorname{null}}

\newcommand{\range}{\operatorname{range}}
\newcommand{\Col}{\operatorname{Col}}

\newcommand{\spec}{\operatorname{spec}}
\newcommand{\vspan}{\operatorname{span}}

\newcommand{\mat}[1][n]{\operatorname{Mat}_{#1}(\mathbb{R})}
\newcommand{\msym}[1][n]{\operatorname{Sym}_{#1}(\mathbb{R})}
\newcommand{\msymz}[1][n]{\operatorname{Sym}_{\bone}^{#1}(\mathbb{R})}
\newcommand{\mskew}[1][n]{\operatorname{Skew}_{#1}(\mathbb{R})}
\newcommand{\mptn}{\mathcal{S}}

\newcommand{\Paw}{\mathsf{Paw}}

\title{The Strong Spectral Property and the Jacobian Method for Weighted Laplacian Matrices}
\author{
Minerva Catral
\thanks{Department of Mathematics, Xavier University, Cincinnati, OH 45207, USA 
(catralm@xavier.edu).}
\and 
Shaun Fallat
\thanks{Department of Mathematics and Statistics, University of Regina, Regina, SK, S4S 0A2, CA 
(shaun.fallat@uregina.ca).}
\and
Himanshu Gupta
\thanks{Department of Mathematics and Statistics, University of Regina, Regina, SK, S4S 0A2, CA
(himanshu.gupta@uregina.ca).}
\and
Jephian C.-H.~Lin
\thanks{Department of Applied Mathematics, National Yang Ming Chiao Tung University, Hsinchu City 300093, Taiwan 
(jephianlin@gmail.com).}
}

\date{\today}

\begin{document}

\maketitle

\begin{abstract}
Strong matrix properties, roughly speaking, refer to generic conditions on a matrix such that its spectral perturbation and pattern perturbation interact nicely to cover a neighborhood in the ambient space.  With a rich history, these strong properties originate from various fields, including the inverse eigenvalue problem, the sign pattern problem, and structural graph theory.  In this paper, we introduce a new strong property, the strong spectral property for weighted Laplacian matrices (SSPWL), and establish the corresponding Supergraph and Bifurcation lemmas.  Instead of the space of symmetric matrices, the SSPWL considers the ambient space spanned by all weighted Laplacian matrices.  Moreover, we provide a detailed study comparing the Jacobian Method and some strong properties, leading to a full understanding between these two techniques used in different problems.  Using these tools, we identify the potential boundaries of the spectral regions of weighted Laplacian matrices associated with connected graphs on $4$ vertices, extending the analysis from the previous work [S. M. Fallat, H. Gupta, and J. C.-H. Lin. Inverse eigenvalue problem for Laplacian matrices of a graph. \emph{SIAM J.\ Matrix Anal.\ Appl.}, 46:1866–1886, 2025]. In addition, this analysis can be used to identify the absolute algebraic connectivity of such small ordered graphs, and we establish the existence of strong weighted Laplacian matrices for several graph families. 
\end{abstract}  

\noindent{\bf Keywords:} 
Inverse eigenvalue problems, graphs, weighted Laplacian matrices, strong property, Jacobian Method, elimination ideal.

\medskip

\noindent{\bf AMS subject classifications:}
%05C38, %%Paths and cycles
05C50, %%Graphs and linear algebra (matrices, eigenvalues, etc.)
% 05C57, %%Games on graphs
%05C75, %%Structural characterization of families of graphs
% 05C83, %%Graph minors
%05C85, %%Graph algorithms
% 15A03, %%Vector spaces, linear dependence, rank
15A18, %%Eigenvalues, singular values, and eigenvectors
05C22, %%Signed and weighted graphs
% 15A29, %%Inverse problems
%15B35, %%Sign pattern matrices
15B57, %%Hermitian, skew-Hermitian, and related matrices
%58C15, %%Implicit function theorems; global Newton methods
65F18. %%Inverse eigenvalue problems
%68R10. %%Graph theory (including graph drawing)

\section{Introduction}

The strong properties, though they appeared as different names over time, have a long impact on matrix theory and its applications to the inverse eigenvalue problem, the sign pattern problem, and structural graph theory.  Roughly speaking, the strong properties are generic conditions on a matrix to ensure its spectral perturbation (that preserves the spectrum, ordered multiplicity list, or inertia, etc.) and the pattern perturbation (that preserves signs or zero/nonzero pattern of the entries) interact nicely so that every nearby matrices in the ambient space can be reached by a combination of these two perturbations.  A typical example is the strong spectral property in the inverse eigenvalue problem of a graph (IEP-$G$).  Let $G$ be a graph on $n$ vertices and let $\mptn(G)$ be the set of all $n\times n$ real symmetric matrices whose off-diagonal $i,j$-entry is nonzero if and only if $\{i,j\}\in E(G)$, while the diagonal entries can be any real number.  We consider two perturbations.  The spectral perturbation is $A\mapsto Q\trans AQ$, where $Q$ is any orthogonal matrix near $I$.  The pattern perturbation is the perturbation around $A$ in $\mptn(G)$.  Geometrically, the matrix $A$ has the strong spectral property (SSP) if and only if the tangent spaces of the two perturbations generate the ambient space of symmetric matrices.  

By the inverse function theorem, the SSP implies that every matrix near $A$ can be reached by a combination of the two perturbations.  Specifically, the Supergraph lemma states that a spectral perturbation of $A$ can reach any nearby pattern.  In contrast, the Bifurcation lemma states that, using the pattern perturbation, $A$ can reach any nearby spectrum.  The SSP and related theorems were introduced in \cite{gSAP, IEPG2, bifur}.  

Strong matrix properties have a long history and have appeared in various fields.  In 1971, Arnold \cite{Arnold71} studied matrices whose entries depend on given parameters and considered the cases where a small perturbation of the parameters is enough to reach all possible Jordan canonical forms of nearby matrices.  Arnold called such a matrix family depending on the parameters a versal deformation, which can be viewed as the prototype of several strong properties.  One year later, Arnold \cite{Arnold72} introduced the hypothesis of transversality for operators on a membrane. This idea was then adopted by Colin de Verdi\`ere as the strong Arnold hypothesis for operators defined on a manifold and matrices associated with a graph \cite{CdV88}.  Using the strong Arnold hypothesis, Colin de Verdi\`ere defined a graph parameter $\mu(G)$ \cite{CdV,CdVF}, which was later referred to as the Colin de Verdi\`ere parameter.  

The Colin de Verdi\`ere parameter $\mu(G)$ is defined as the largest multiplicity of the second smallest eigenvalue over the discrete Schr\"odinger operators on a graph that satisfies the strong Arnold hypothesis.  Despite its linear algebra nature, the parameter has a strong connection to the topological properties of the graph.  It is known that $\mu(G) \leq 1$ if and only if $G$ is a path, $\mu(G) \leq 2$ if and only if $G$ is outerplanar, $\mu(G) \leq 3$ if and only if $G$ is planar, and $\mu(G) \leq 4$ if and only if $G$ is linklessly embeddable \cite{CdV,CdVF,KLV}.  Moreover, $\mu(G)$ is shown to be minor monotone, which means $\mu(G) \leq \mu(H)$ if $G$ is a minor of $H$.  Therefore, by the graph minor theorem (see, e.g., \cite{DiestelGT}), the minimal forbidden minors of $\mu(G) \leq k$ are a finite set of graphs for any $k$.

The strong Arnold hypothesis is usually called the strong Arnold property (SAP) in the IEP-$G$ literature.  Motivated by the parameter $\mu(G)$, the other minor monotone parameter $\xi(G)$ was developed based on the SAP \cite{BFH3}.  Later, the strong multiplicity property (SMP) and the strong spectral property (SSP) were introduced and related theorems were established to solve the IEP-$G$ for graphs up to $5$ vertices and many graph families \cite{gSAP,IEPG2,bifur}.  For more details on the IEP-$G$ and several strong properties, see, e.g., \cite{IEPGZF22,CGSV25,CS20,ACFLS24}.

The strong properties appeared in many different fields under different names.  In the nonnegative inverse eigenvalue problem (NIEP), Laffey studied the extreme nonnegative matrices and showed that matrices with some kind of strong property cannot be extreme \cite{Laffey98}.  In the sign pattern problem, the Jacobian Method and the Nilpotent-Centralizer Method were introduced to recognize a spectrally arbitrary sign pattern, while the assumptions of these two methods can also be viewed as some strong properties \cite{DJOvdD00,BMOvdD04,GB12,GB13}.  

% https://www-math.nsysu.edu.tw/~chlin/Publications/Slides/JMM2025.pdf

Let $G$ be graph on $n$ vertices. Define $\mptn_L(G)$ to be the set of all weighted Laplacian matrices associated with $G$. That is, $\mptn_L(G)$ consists of all $n\times n$ real symmetric matrices $A = \begin{bmatrix} a_{i,j} \end{bmatrix}$ such that 
\[
    a_{i,j} \begin{cases}
    < 0 & \text{if } \{i,j\}\in E(G), \\
    = 0 & \text{if } \{i,j\}\notin E(G),\ i\neq j, \\
    - \sum_{k: k\sim i}a_{i,k} & \text{if } i = j.
    \end{cases}
\]

Any matrix $A$ in $\mptn_L(G)$ is singular and positive semidefinite (just like the combinatorial/unweighted Laplacian matrix). Moreover, if the eigenvalues of $A$ are $0 \leq \lambda_2 \leq \lambda_3\leq  \ldots\leq \lambda_n$, then the nullity of $A$ (denoted $\nul(A)$) equals the number of connected components of $G$. In particular, $\lambda_2(A) > 0$ if and only if $G$ is connected.

The combinatorial and weighted Laplacian matrices of a graph are central objects in combinatorics and spectral graph theory, serving as primary tools for understanding structural and connectivity properties of graphs. For a fixed graph $G$, we say that a collection of real numbers $0 \leq \lambda_2 \leq \lambda_3\leq  \ldots\leq \lambda_n$, is \emph{Laplacian realizable} if there exists $A \in \mptn_L(G)$ with spec$(A) = \{0,\lambda_2,\lambda_3,\ldots,\lambda_n\}$. 
The inverse eigenvalue problem for Laplacian matrices of a graph (IEPL) was first formulated in \cite{IEPL}. For a fixed graph $G$, it seeks to characterize all Laplacian realizable sequences of real numbers. We refer the reader to \cite{IEPL} for further results and motivation.

In this paper, we introduce a new strong property, the strong spectral property for weighted Laplacian matrices (SSPWL).  We build the Supergraph lemma and the Bifurcation lemma for the SSPWL.  We show that the Jacobian Method also works for weighted Laplacian matrices.  We prove that the Jacobian matrix associated with a matrix $A$ has full rank if and only if $A$ has all eigenvalues distinct and $A$ has the SSPWL.  This provides a full understanding of the Jacobian Method, the Nilpotent-Centralizer Method, and the strong properties.

The paper is organized as follows. 
In \cref{sec:sspwl}, we introduce the SSPWL, the Supergraph lemma, and the Bifurcation lemma.  In \cref{sec:jac}, we describe how the Jacobian Method can also be utilized for the inverse eigenvalue problem for Laplacian matrices of a graph.  We prove that the Jacobian matrix has full rank if and only if the original matrix has all eigenvalues distinct and has the SSPWL.  With the Jacobian Method, we show that for every connected graph $G$, there is a weighted Laplacian matrix relative to $G$ with the SSPWL.
Using these tools, we consider the IEPL for connected graphs on $4$ vertices in \cref{sec:iepl4}.
In \cref{sec:jacall}, we give a comprehensive study on Jacobian Methods and show the following.
\begin{enumerate}
\item In the sign pattern problem, the Jacobian matrix has full rank if and only if the original matrix has its minimal polynomial degree $n$ and has the non-symmetric strong spectral property (nSSP).  
\item In the inverse eigenvalue problem of a graph, the Jacobian matrix has full rank if and only if the original matrix has distinct eigenvalues and has the strong spectral property (SSP).  
\end{enumerate}
Finally, we summarize our work and list some future directions in \cref{sec:concluding_remarks}.

Before moving on to the next section, we introduce some terminology and notation.  We use $\bone$ and $\bzero$ for the all-ones vector and the zero vector, where the order will be clear from the context. A standard basis vector with a unique 1 in the $i$th position is denoted by $e_i$. Let $\mat$ be the vector space of $n\times n$ real matrices equipped with the inner product $\inp{A}{B} = \tr(B\trans A)$.  Let $\msym$ be the subspace of $\mat$ of all symmetric matrices.  We also define  $\msymz = \{A \in \msym: A\bone = \bzero\}$. For $A \in \mat$, we let $\Col(A)$ denote the column space of the matrix $A$. Finally, we let $N$ be the  $(0,1,-1)$ vertex-edge-oriented incidence matrix of some orientation of the edges of $G$.  Then the combinatorial Laplacian matrix of the graph $G$ can be written as $NN\trans$. 

\section{Strong Spectral Property for Weighted Laplacian Matrices}
\label{sec:sspwl}

In this section, we develop, akin to previous such properties, a strong matrix property specific to weighted Laplacian matrices associated with a graph. We refer to this matrix property as the strong spectral property for weighted Laplacian matrices, or SSPWL for short. This property provides a key tool for investigating the IEPL.

As noted above for  any graph $G$, it is well known that $A \in \mptn_L(G)$ is singular and that $A\bone = \bzero$. This common null vector requires special attention when considering the matrix perturbations that result from the so-called strong matrix properties, in part due to the fact that any such desired perturbation must also have the all-ones vector $\bone$ in its null space. To this end, we consider the projection $H_n$ from $\mathbb{R}^n$ onto $\vspan\{\bone\}^{\perp}$. In this case, we have $H_n = I_n - \frac{1}{n}J_n$, where $J_n$ is the $n\times n$ matrix of all ones.  When $n$ is obvious from the context, we write $H = H_n$ and $J=J_n$. 

We begin by defining the strong spectral property for matrices in $S_L(G)$ and then follow with  several facts and consequences concerning this strong property in the remainder of this section.

\begin{definition}
Let $A \in \mptn_L(G)$.  We say $A$ has the \emph{strong spectral property for weighted Laplacian matrices} (or \emph{SSPWL}) if $X = O$ is the only symmetric matrix such that $A\circ X = I\circ X = O$ and $H(AX - XA)H = O$.  
\end{definition}

Clearly, the SSPWL resembles the standard strong spectral property for symmetric matrices (see, e.g., \cite{IEPG2, gSAP}); however, the requirement that $H(AX - XA)H = O$ adds a level of difficulty when trying to determine when such a matrix $A$ has the SSPWL. To address this extra layer of difficulty, we now state some auxiliary propositions that will be used frequently.

\begin{proposition}
\label{prop:hzero}
Let $X$ be an $n\times n$ matrix.  Then the following are equivalent.
\begin{enumerate}[label={\rm(\arabic*)}]
\item $HXH = O$. \label{item:hzero1}
\item $P_1\trans XP_2 = O$, where $P_1$ and $P_2$ are some matrices with $\Col(P_1) = \Col(P_2) = \vspan\{\bone\}^\perp$. \label{item:hzero2}
\item $N_1\trans XN_2 = O$, where $N_1$ and $N_2$ are the vertex-edge incidence matrix of some trees. \label{item:hzero3}
\item $(\be_{i_1} - \be_{j_1})\trans X (\be_{i_2} - \be_{j_2}) = 0$ for all pairs $i_1 \neq j_1$ and $i_2 \neq j_2$. \label{item:hzero4}
\end{enumerate}
\end{proposition}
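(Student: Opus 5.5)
The plan is to reduce everything to one linear-algebra fact: if $P_1,P_2$ are $n\times m_1$ and $n\times m_2$ matrices whose column spaces are $W=\vspan\{\bone\}^\perp$, then $P_1\trans X P_2=O$ holds if and only if $\bu\trans X\bw=0$ for all $\bu,\bw\in W$. Indeed, the $(k,l)$-entry of $P_1\trans XP_2$ is $\bu_k\trans X\bw_l$, where $\bu_k$ and $\bw_l$ are the columns of $P_1$ and $P_2$. By bilinearity, these entries all vanish exactly when the bilinear form $(\bu,\bw)\mapsto\bu\trans X\bw$ vanishes on $\Col(P_1)\times\Col(P_2)=W\times W$. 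So statement \ref{item:hzero2} is equivalent to this condition for any (equivalently, for some) choice of $P_1,P_2$. Call this condition $(\ast)$.

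Next I show that each item is an instance of $(\ast)$.

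For \ref{item:hzero1}: $H$ is symmetric and $\Col(H)=W$, since $H$ is the orthogonal projection onto $W$. Taking $P_1=P_2=H$ in the fact above gives \ref{item:hzero1}$\iff(\ast)$. Hence \ref{item:hzero1}$\iff$\ref{item:hzero2}.

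For \ref{item:hzero3}: let $N_1$ be the oriented vertex-edge incidence matrix of a tree on $n$ vertices. Its columns have the form $\be_i-\be_j$, so they lie in $W$. The tree is connected, so $\nul(N_1\trans)=\vspan\{\bone\}$. Hence $\operatorname{rank}N_1=n-1$ and $\Col(N_1)=W$. The same holds for $N_2$, and the fact above gives \ref{item:hzero3}$\iff(\ast)$. This rank computation is the only step needing any graph theory, and it is standard: a vector $\bu$ with $N\trans\bu=\bzero$ is constant across each edge, hence constant on a connected graph.

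For \ref{item:hzero4}: the vectors $\be_i-\be_j$ with $i\neq j$ lie in $W$ and span it, because the path edges $\be_1-\be_2,\dots,\be_{n-1}-\be_n$ already give $n-1$ independent vectors. By bilinearity, \ref{item:hzero4}$\iff(\ast)$. Alternatively, one can note that \ref{item:hzero4} is \ref{item:hzero2} with $P_1=P_2$ equal to the matrix whose columns are all the vectors $\be_i-\be_j$.

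The only real obstacle is the quantifier "some" in \ref{item:hzero2} and \ref{item:hzero3}. I will state at the outset that the condition does not depend on the particular matrices chosen, since all of them are equivalent to $(\ast)$. With that in place, the equivalence chain \ref{item:hzero1}$\iff(\ast)\iff$\ref{item:hzero2}$\iff$\ref{item:hzero3}$\iff$\ref{item:hzero4} completes the proof.
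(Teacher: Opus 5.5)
Your proof is correct and takes essentially the same route as the paper. Both arguments rest on the fact that $P_1\trans X P_2 = O$ depends only on the column spaces of $P_1$ and $P_2$. The paper expresses this through factorizations $HL_1 = P_1$, $P_1L_2 = H$ (and similarly for $P_2$), while you express it through the bilinear form $(\bu,\bw)\mapsto \bu\trans X\bw$ on $\vspan\{\bone\}^\perp$. You also verify explicitly that a tree's oriented incidence matrix has column space $\vspan\{\bone\}^\perp$ and that the vectors $\be_i-\be_j$ span it; the paper uses both facts without comment.
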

\begin{proof}
Observe that $\Col(H) = \Col(P_1) = \Col(P_2) = \vspan\{\bone\}^\perp$.  Therefore, there are matrices $L_1$ and $L_2$ such that $HL_1 = P_1$ and $P_1L_2 = H$.  Similarly, there are matrices $R_1$ and $R_2$ such that $HR_1 = P_2$ and $P_2R_2 = H$.  Consequently, if $HXH = O$, then $P_1\trans XP_2 = L_1\trans HXHR_1 = O$, and if $P_1\trans XP_2 = O$, then $HXH = L_2\trans P_1\trans XP_2R_2 = O$.  Thus, \ref{item:hzero1} and \ref{item:hzero2} are equivalent.  

Statement~\ref{item:hzero3} is a special case of Statement~\ref{item:hzero2}. 
Statement~\ref{item:hzero3} is equivalent to $E\trans XE = O$, where $E$ is a matrix with $\binom{n}{2}$ columns of the form $\be_i - \be_j$.  Since $\Col(H) = \Col(E)$, Statement~\ref{item:hzero4} may be viewed as a special case of Statement~\ref{item:hzero2} since the vectors $\be_{i} - \be_{j}$ with $i\neq j$ span the subspace $\{\bone\}^\perp$.
\end{proof}

Now we introduce the verification matrix to check whether a given matrix has the SSPWL.  

\begin{definition}
\label{def:verification}
Let $A\in\mptn_L(G)$ and $P$ a matrix with $\Col(P) = \vspan\{\bone\}^\perp$.  Define the \emph{SSPWL verification matrix of $A$ with respect to $P$} as $\Psi_A$ whose rows are indexed by $E(\overline{G})$ such that the row of $\Psi_A$ corresponding to $e\in E(\overline{G})$ is a vector corresponding to the strictly upper triangular part of $P\trans(AE_e - E_eA)P$.  Here $E_e$ is the matrix whose two entries corresponding to the edge $e$ are $1$ while all other entries are $0$.  By default, the rows and columns of $\Psi_A$ are sorted by lexicographic order.   
\end{definition}

\begin{example}
\label{ex:p4sspwl}
Consider the matrices 
\[
    A = \begin{bmatrix}
        0.5 & -0.5 & 0 & 0 \\
        -0.5 & 1.5 & -1 & 0 \\
        0 & -1 & 2.5 & -1.5 \\
        0 & 0 & -1.5 & 1.5
    \end{bmatrix}
    \text{ and }
    P = \begin{bmatrix}
        1 & 1 & 1 \\
        -1 & 0 & 0 \\
        0 & -1 & 0 \\
        0 & 0 & -1
    \end{bmatrix}
\]
with $A\in\mptn_L(P_4)$.  Let $e_1 = \{1,3\}$, $e_2 = \{1,4\}$, $e_3 = \{2,4\}$, and $E_{e_i}$ be the matrix $4\times 4$ whose entries correspond to $e_i$ are $1$ while all other entries are $0$.  Then any real symmetric matrix $X$ satisfying $A\circ X = I\circ X = O$ can be written as $X = c_1E_{e_1} + c_2E_{e_2} + c_3E_{e_3}$.  By \cref{prop:hzero}, $H(AX - XA)H = O$ if and only if 
\[
    P\trans(AX - XA)P = \sum_{i=1}^3 c_i P\trans(AE_{e_i} - E_{e_i}A)P = O.
\]
Since $P\trans(AE_{e_i} - E_{e_i}A)P$ is a skew-symmetric matrix, we focus on the entries in the strict upper triangular part, which has three entries.  This leads to the verification matrix
\[
    \Psi = \begin{bmatrix}
        2.5 & -0.5 & -3.5 \\
        -1.5 & 0.5 & 2.5 \\
        1.5 & 0.5 & -0.5
    \end{bmatrix}, 
\]
whose rows correspond to $e_1,e_2,e_3$ and columns correspond to the strict upper triangular entries of the matrix $P\trans(AE_{e_i} - E_{e_i}A)P$.  The verification matrix $\Psi$ has a nontrivial vector $(3,4,-1)$ in its left kernel.  Thus, it follows that $X = 3E_{e_1} + 4E_{e_2} - E_{e_3}$ witnesses the failure of the SSPWL of the matrix $A$ above.
\end{example} 

\begin{corollary}
Let $A\in\mptn_L(G)$.  Then $A$ has the SSPWL if and only if $\Psi_A$ has full row rank.  The statement is independent of the choice of $P$.
\end{corollary}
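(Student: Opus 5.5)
The plan is to translate the defining condition of the SSPWL into a linear-algebra statement about $\Psi_A$. First I will describe the symmetric matrices $X$ with $A\circ X = I\circ X = O$. Since $A\in\mptn_L(G)$ has nonzero entries exactly on the diagonal and at positions corresponding to edges of $G$ (at least when each vertex has an incident edge; if $a_{ii}=0$, the condition $I\circ X=O$ still kills the diagonal), such an $X$ has zero diagonal and zero entries on $E(G)$. So $X$ ranges exactly over the span of $\{E_e : e\in E(\overline{G})\}$, and I write $X=\sum_{e\in E(\overline{G})} c_e E_e$, with the $c_e$ uniquely determined because the $E_e$ are linearly independent. This is the same reduction carried out in \cref{ex:p4sspwl}.

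Next I will apply \cref{prop:hzero}, in the equivalence \ref{item:hzero1}$\Leftrightarrow$\ref{item:hzero2} with $P_1=P_2=P$. This gives
\[
H(AX-XA)H=O \quad\Longleftrightarrow\quad P\trans(AX-XA)P=O .
\]
By linearity the right-hand side equals $\sum_e c_e P\trans(AE_e-E_eA)P$. Each summand is skew-symmetric, because $AE_e-E_eA$ is skew-symmetric when $A$ and $E_e$ are symmetric, and congruence preserves skew-symmetry. A skew-symmetric matrix vanishes if and only if its strictly upper triangular part vanishes. Hence the condition is equivalent to $\sum_e c_e\,\psi_e=\bzero$, where $\psi_e$ is the row of $\Psi_A$ indexed by $e$. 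In other words, it says that $(c_e)_e$ lies in the left kernel of $\Psi_A$.

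Combining these steps, $A$ has the SSPWL if and only if the only vector $(c_e)$ in the left kernel of $\Psi_A$ is zero, which is exactly the statement that $\Psi_A$ has full row rank. For independence of $P$, I note that the first characterization (only $X=O$ satisfies the conditions with $H(AX-XA)H=O$) makes no reference to $P$. Since full row rank of $\Psi_A$ is equivalent to this characterization for every admissible $P$, the rank condition is the same for all choices of $P$.

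The only delicate point is the first step: the constraint set must be exactly the span of the $E_e$ with $e\in E(\overline{G})$. I will check this directly from the sign pattern defining $\mptn_L(G)$, where edge entries are strictly negative and therefore nonzero, and from the fact that $I\circ X=O$ forces the diagonal of $X$ to be zero.
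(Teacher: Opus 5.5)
Your proposal is correct and is essentially the paper's argument. The paper gives no separate proof of this corollary; its reasoning is the one carried out in \cref{ex:p4sspwl} via \cref{prop:hzero}, namely writing $X=\sum_{e\in E(\overline{G})}c_eE_e$, replacing $H(AX-XA)H=O$ by $P\trans(AX-XA)P=O$, and reading off the left kernel of $\Psi_A$ from the strictly upper triangular parts of the skew-symmetric matrices $P\trans(AE_e-E_eA)P$. Your observation that the SSPWL condition itself does not mention $P$ settles independence of $P$ exactly as intended.
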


\begin{corollary}
\label{cor:hbij}
The linear map $X\mapsto HXH$ from $\{X\in\msym: I\circ X = O\}$ to $\msymz$ is a bijection. 
\end{corollary}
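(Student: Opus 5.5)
The plan is to prove the corollary by checking that the map lands in $\msymz$, that it is injective, and that the two spaces have the same dimension, so that injectivity gives bijectivity.

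\emph{Well-definedness.} If $X$ is symmetric, then $HXH$ is symmetric because $H$ is symmetric. Since $H\bone = \bzero$, we get $HXH\bone = \bzero$. Hence $HXH\in\msymz$, and the map $X\mapsto HXH$ is clearly linear.

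\emph{Dimensions.} The domain $\{X\in\msym : I\circ X = O\}$ consists of symmetric matrices with zero diagonal, so its dimension is $\binom{n}{2}$. A matrix $B\in\msymz$ is determined by its off-diagonal entries: each diagonal entry is fixed by $B\bone=\bzero$ as $b_{ii} = -\sum_{j\neq i} b_{ij}$. Conversely, any choice of symmetric off-diagonal entries extends uniquely in this way to a matrix of $\msymz$. Therefore $\dim\msymz = \binom{n}{2}$ as well.

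\emph{Injectivity.} This is the only step with real content. Suppose $X$ is symmetric with zero diagonal and $HXH = O$. By \cref{prop:hzero}\ref{item:hzero4}, we have $(\be_{i_1}-\be_{j_1})\trans X(\be_{i_2}-\be_{j_2}) = 0$ for all admissible index pairs. Take $i\neq j$ and choose $i_1=i_2=i$ and $j_1=j_2=j$. Using $x_{ii}=x_{jj}=0$ and symmetry, this gives
\[
x_{ii} - x_{ij} - x_{ji} + x_{jj} = -2x_{ij} = 0.
\]
So every off-diagonal entry of $X$ vanishes, and therefore $X=O$.

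An injective linear map between spaces of equal finite dimension is a bijection, which proves the corollary. I do not expect a real obstacle here. The only place needing care is to test \cref{prop:hzero}\ref{item:hzero4} with the same vector $\be_i-\be_j$ on both sides, which isolates the entry $x_{ij}$ directly.
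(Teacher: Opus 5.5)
Your proof is correct and follows essentially the same route as the paper. Like the paper, you get injectivity by applying \cref{prop:hzero} with $\be_i-\be_j$ on both sides to force $x_{ij}=0$, then match the dimensions $\binom{n}{2}$; you additionally spell out well-definedness and the computation of $\dim\msymz$, which the paper leaves implicit.
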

\begin{proof}
It is straightforward to see that the map is linear.  We first verify that this map is injective by establishing that its kernel is $\{O\}$.  If $HXH = O$, then  
\[
    (\be_i - \be_j)\trans X (\be_i - \be_j) = 0 
\]
by \cref{prop:hzero}, which implies that the $i,j$-entry of $X$ is zero since $I\circ X = O$.  Since $i$ and $j$ are arbitrary, $X = O$.

Since 
\[
    \dim(\{X\in\msym: I\circ X = O\}) = \dim(\msymz) = \binom{n}{2},
\]
this injective map is indeed a bijection.
\end{proof}

\subsection{Manifolds and Perturbations}

We first note that the intuitive extension of the strong spectral property (SSP) does not apply in the case of weighted Laplacian matrices.  Let $A\in\mptn(G)$.  Recall that the strong spectral property \cite{IEPG2, gSAP} is equivalent to the transversal intersection at $A$ between the manifolds
\[
    \begin{aligned}
        \mathcal{M}_1 &= \mptn(G), \text{ and } \\
        \mathcal{M}_2 &= \{Q\trans AQ: Q \text{ orthogonal}\}.
    \end{aligned}
\]
Thus, a natural analog would be the transversality at $A\in\mptn_L(G)$ between $\mptn_L(G)$ and $\mathcal{M}_2$.  Unfortunately, the all-ones matrix $J$ is always in the normal spaces of both $\mptn_L(G)$ and $\mathcal{M}_2$ at $A$.  The tangent space of $\mptn_L(G)$ at $A$ is the span of $\mptn_L(G)$.  Each of the matrix in the tangent space has each row sum equal to zero, so $J$ is orthogonal to the tangent space.  On the other hand, $AJ - JA = O$ indicates that $J$ is in the normal space of $\mathcal{M}_2$ at $A$.  Therefore, in this extended version no matrix will have the strong property.  

The reason for this behavior is because there is too much freedom in the ambient space $\msym$ so that $J$ is always in the normal space.  If we shrink the ambient space, then the normal spaces shrink as well and could possibly intersect trivially.  

Instead, we consider two sets
\[
    \begin{aligned}
        \mathcal{M}'_1 &= \mptn_L(G), \text{ and } \\
        \mathcal{M}'_2 &= \{Q\trans AQ: Q \text{ orthogonal},\ Q\bone = \bone\}.
    \end{aligned}
\]
Observe that there is a slight change from $\mathcal{M}_2$ to $\mathcal{M}'_2$ since $\mathcal{M}_2$ is not included in the new ambient space $\msymz$.  

We now consider some foundational analysis concerning the sets 
 $\mathcal{M}'_1$ 
        $\mathcal{M}'_2$, from the matrix perturbation point of view.  Let $A\in\mptn_L(G)$.  Consider the function  
\begin{equation}
\label{eq:perturb}
    F(B, K) = e^{-K}Ae^{K} + B    
\end{equation}
with codomain $\msymz$, where $B \in \vspan\mptn_L(G)$ and $K$ is a skew-symmetric matrix with $K\bone = \bzero$.  

\begin{proposition}
\label{prop:range} Let $G$ be a graph on $n$ vertices and $A\in\mptn_L(G)$.
Assume $B \in \vspan\mptn_L(G)$ and $K$ is a skew-symmetric matrix satisfying $K\bone = \bzero$.
Let $F$ be the function as defined in \cref{eq:perturb}, having codomain $\msymz$.  Define
\[
    \dot{F} = \dot{F}\big|_{\substack{B = O\\K = O}},\quad 
    F_B = \frac{\partial F}{\partial B} \big|_{\substack{B = O\\K = O}}, \quad\text{and}\quad 
    F_K = \frac{\partial F}{\partial K} \big|_{\substack{B = O\\K = O}}.
\]
Then the following statements hold.
\begin{enumerate}[label={\rm(\arabic*)}]
\item $\range(F_B) = \vspan\mptn_L(G)$. \label{item:range-fb}
\item $\range(F_B)^{\perp} = \{HXH: X\in\msym,\ A\circ X = I \circ X = O\}$. \label{item:range-fbp}
\item $\range(F_K) = \{K\trans A + AK: K\in\mskew,\ K\bone = \bzero\}$. \label{item:range-fk}
\item $\begin{aligned}[t]
    \range(F_K)^{\perp} &= \{Y \in \msymz: AY - YA = O\} \\
    &= \{HXH: X\in\msym,\ H(AX - XA)H = O\}.
\end{aligned}$ \label{item:range-fkp}
\end{enumerate}
\end{proposition}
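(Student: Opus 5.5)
The proof treats the four statements in order. All orthogonal complements are taken inside the ambient space $\msymz$ with the trace inner product.

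For \ref{item:range-fb}, $F$ is affine in $B$, so $F_B$ is the identity on $\vspan\mptn_L(G)$. That span consists of the symmetric matrices with zero row sums that vanish off $E(G)$ away from the diagonal; it equals $\vspan\{L_e: e\in E(G)\}$, where $L_e=(\be_i-\be_j)(\be_i-\be_j)\trans$. For \ref{item:range-fk}, differentiate $e^{-K}Ae^{K}$ at $K=O$ to get $-KA+AK=K\trans A+AK$. This matrix is symmetric, and $(K\trans A+AK)\bone=-KA\bone+AK\bone=\bzero$ because $A\bone=K\bone=\bzero$, so the range lies in $\msymz$ and is exactly the stated set.

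For \ref{item:range-fbp}, by \cref{cor:hbij} every $Y\in\msymz$ can be written uniquely as $Y=HXH$ with $X\in\msym$ and $I\circ X=O$. Since $HL_eH=L_e$, we get
\[
\inp{HXH}{L_e}=\tr(XHL_eH)=\tr(XL_e)=(\be_i-\be_j)\trans X(\be_i-\be_j)=-2x_{ij}.
\]
Hence $Y\perp\vspan\mptn_L(G)$ if and only if $x_{ij}=0$ for every edge $\{i,j\}$. Together with $I\circ X=O$, this is the condition $A\circ X=O$ (the off-diagonal pattern of $A$ is exactly $E(G)$). Conversely, an $X$ with $A\circ X=I\circ X=O$ gives $HXH\in\msymz$ orthogonal to every $L_e$. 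This proves \ref{item:range-fbp}.

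For \ref{item:range-fkp}, take $Y\in\msymz$. Then $\inp{Y}{K\trans A+AK}=\tr(Y(AK-KA))=\tr((YA-AY)K)$. The matrix $C=YA-AY$ is skew-symmetric, and $C\bone=\bzero$ and $\bone\trans C=\bzero\trans$ because $A$ and $Y$ both annihilate $\bone$. So $Y\perp\range(F_K)$ if and only if $C$ is orthogonal to every skew-symmetric $K$ with $K\bone=\bzero$. The main step is to show this subspace contains $C$ itself. It does, since $K=C$ is admissible, and then $\tr(C\trans C)=0$ forces $C=O$. Conversely, if $C=O$ the inner product vanishes. This gives the first description, $\{Y\in\msymz: AY=YA\}$. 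For the second, write $Y=HXH$. Because $AH=HA=A$ (as $A\bone=\bzero$ and $A$ is symmetric), we have $AY-YA=H(AX-XA)H$. Since every $Y\in\msymz$ has the form $HXH$ with $X$ symmetric, the two sets coincide.
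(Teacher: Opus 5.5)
Your proof is correct and follows essentially the same route as the paper. Parts (1) and (3) are direct derivative computations, and part (4) uses the same choice of $K = AY - YA$ (up to sign), followed by $AH = HA = A$, to turn $AY - YA$ into $H(AX - XA)H$. The one variation is in part (2): you use the surjectivity in \cref{cor:hbij} to write every $Y \in \msymz$ as $HXH$ with $I \circ X = O$ and then read off $\inp{HXH}{L_e} = -2x_{ij}$. The paper instead proves one inclusion and closes it with a dimension count, so your version simply avoids that count.
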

\begin{proof}
The ranges of $F_B$ and $F_K$ are straightforward.  The computation for the function $e^K$ can be found in \cite[Example~2.1]{bifur}.

Next, we claim that $\range(F_B)^{\perp} = \{HXH: A\circ X = I \circ X = O\}$.  We emphasize that the orthogonal complement is taken with respect to the ambient space $\msymz$.  Let $L\in\mptn_L(G)$ and $X$ a real symmetric matrix such that $A\circ X = I\circ X = O$.  Note that $LH = HL = L$ since the rows and columns of $L$ are in $\vspan\{\bone\}^\perp$.  Then 
\[
    \inp{L}{HXH} = \tr(HXHL) = \tr(HXL) = \tr(XLH) = \tr(XL) = 0.
\]
Since the argument works for any $L\in\mptn_L(G)$, the matrix $HXH$ is in the orthogonal complement of $\range(F_B)$.  On the other hand, we count the dimension to ensure that $\{HXH: A\circ X = I\circ X = O\}$ is the orthogonal complement.  By \cref{cor:hbij}, $X\mapsto HXH$ is an injective map from $\{HXH: A\circ X = I\circ X = O\}$ to $\msymz$.  Thus, $\dim(\{HXH: A\circ X = I\circ X = O\}) = |E(\overline{G})|$.  Since $\dim(F_B) = |E(G)|$ and $\dim(\msymz) = \binom{n}{2} = |E(G)| + |E(\overline{G})|$, the claim follows.

Next, we find $\range(F_K)^{\perp}$.  Let $Y$ be a matrix in $\msymz$ that is orthogonal to $K\trans A + AK$ for any skew-symmetric matrix $K$ with $K\bone = \bzero$.  Then 
\[
    \begin{aligned}
        0 &= \tr(Y(K\trans A + AK)) = \tr(K\trans AY - K\trans YA) \\
        &= \inp{AY - YA}{K}
    \end{aligned}
\]
for any skew-symmetric matrix $K$ with $K\bone = \bone$.  Note that $AY - YA$ is also a skew-symmetric matrix with $(AY - YA)\bone = \bzero$.  By selecting $K = AY - YA$, we conclude that $AY - YA = O$.  By \cref{cor:hbij}, we may write $Y = HXH$ for some $X\in\msym$.  Thus, the orthogonal complement is given by 
\[
    \begin{aligned}
        &\mathrel{\phantom{=}}\{Y \in \msymz: AY - YA = O\} \\
      & = \{HXH: X\in\msym,\ AHXH - HXHA = O\} \\
        & =\{HXH: X\in\msym,\ H(AX - XA)H = O\}.
    \end{aligned}
\]
This completes the proof.
\end{proof}

The next result follows from the previous proposition.

\begin{corollary}
\label{cor:dim} 
Let $F$ be the function defined in \cref{eq:perturb} and follow the same hypotheses as in \cref{prop:range}.
Assume that $\spec(A) = \{\lambda_1^{(m_1)}, \ldots, \lambda_q^{(m_q)}\}$ with $0 = \lambda_1 < \cdots < \lambda_q$.  Then the following statements hold:
\begin{enumerate}[label={\rm(\arabic*)}]
\item $\dim(\range(F_B)) = |E(G)|$. \label{item:dim-fb}
\item $\dim(\range(F_B)^{\perp}) = |E(\overline{G})|$. \label{item:dim-fbp}
\item $\dim(\range(F_K)) = \binom{n}{2} - \binom{m_1}{2} - \sum_{i=2}^q\binom{m_i + 1}{2}$. \label{item:dim-fk}
\item $\dim(\range(F_K)^{\perp}) = \binom{m_1}{2} + \sum_{i=2}^q\binom{m_i + 1}{2}$. \label{item:dim-fkp}
\end{enumerate}
\end{corollary}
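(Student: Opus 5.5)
Items (1) and (2) come straight from \cref{prop:range} and the dimension count already made in its proof. For (1), $\range(F_B)=\vspan\mptn_L(G)$, and the matrices $N_eN_e\trans=(\be_i-\be_j)(\be_i-\be_j)\trans$, $e=\{i,j\}\in E(G)$, form a basis of this span. They span it because each weighted Laplacian is $\sum_e w_e N_eN_e\trans$. They are linearly independent because the off-diagonal $i,j$-entry isolates the coefficient of edge $\{i,j\}$. Hence the dimension is $|E(G)|$. For (2), note that $\dim\msymz=\binom n2$: a symmetric matrix killing $\bone$ is determined by its strictly upper triangular entries, since the diagonal is then forced. Subtracting (1) gives $|E(\overline G)|$. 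The same number also follows directly from \cref{cor:hbij}, because $X\mapsto HXH$ is injective on $\{X\in\msym: A\circ X=I\circ X=O\}$, a space of dimension $|E(\overline G)|$.

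For (4) I will use the first description in \cref{prop:range}\ref{item:range-fkp}, namely $\range(F_K)^\perp=\{Y\in\msymz: AY=YA\}$. Write the spectral decomposition $A=\sum_{i=1}^q\lambda_i E_i$, where $E_i$ is the orthogonal projection onto the $\lambda_i$-eigenspace $W_i$ and $\dim W_i=m_i$. Since $G$ may be disconnected, $W_1=\nul(A)$ contains $\bone$ with $m_1\ge1$. A symmetric $Y$ commutes with $A$ if and only if it preserves every $W_i$, that is, $Y=\bigoplus_i Y_i$ with $Y_i$ a symmetric operator on $W_i$. The remaining condition is $Y\bone=\bzero$, and because $\bone\in W_1$ it only restricts $Y_1$: $Y_1$ must be symmetric on $W_1$ with $\bone$ in its kernel, which is the same as being a symmetric operator on the $(m_1-1)$-dimensional space $W_1\cap\bone^\perp$. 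This gives
\[
\dim\range(F_K)^\perp=\binom{m_1}{2}+\sum_{i=2}^q\binom{m_i+1}{2},
\]
because the symmetric operators on a $d$-dimensional space form a space of dimension $\binom{d+1}{2}$, and for $d=m_1-1$ this is $\binom{m_1}{2}$.

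Item (3) then follows as $\binom n2$ minus (4), since the orthogonal complement is taken inside the ambient space $\msymz$ of dimension $\binom n2$.

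The only delicate step is in (4): I must check that the constraint $Y\bone=\bzero$ falls entirely on the zero eigenspace block and reduces its dimension from $\binom{m_1+1}{2}$ to exactly $\binom{m_1}{2}$. This works because $\bone\in W_1$, so $Y\bone=Y_1\bone$, and the symmetric operators on $W_1$ that kill $\bone$ are exactly those supported on $W_1\cap\bone^\perp$.
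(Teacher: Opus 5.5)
Your proposal is correct and follows essentially the same route as the paper. For (4), the paper works in an orthonormal eigenbasis $Q$ whose first column is $\frac{1}{\sqrt n}\bone$, so that $Q\trans YQ$ is block diagonal along the eigenvalue groups with its first row and column zero. This gives $\binom{m_1}{2}+\sum_{i\ge 2}\binom{m_i+1}{2}$ free entries, which is your basis-free count via $W_1\cap\bone^\perp$, and (3) is then obtained as the complement in $\msymz$, just as you do.
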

\begin{proof}
Statements \ref{item:dim-fb} and \ref{item:dim-fbp} are straightforward.  We focus on \ref{item:dim-fkp}, and note that \ref{item:dim-fk} is an immediate consequence by applying basic properties of orthogonal complementary subspaces.

Diagonalize $A$ as $Q\trans AQ = D$ so that $\frac{1}{\sqrt{n}}\bone$ is the first column of $Q$ and the $1,1$-entry of $D$ is zero.  Now we may write $AY - YA = O$ as $D(Q\trans YQ) - (Q\trans YQ)D = O$.  Moreover, $Y\in\msymz$ is equivalent to $Q\trans YQ$ has its first row and column zero.  This matrix commutes with $D$ if and only if the $i,j$-entry of $Q\trans YQ$ is zero whenever the $i,i$-entry and the $j,j$-entry of $D$ are distinct.  Thus, $Q\trans YQ$ has $\binom{m_1}{2} + \sum_{i=2}^q\binom{m_i + 1}{2}$ free entries.
\end{proof}

\begin{proposition}
\label{prop:rangestrong} 
Let $F$ be the function defined in \cref{eq:perturb} and follow the same hypotheses as in \cref{prop:range}.
Then the following are equivalent. 
\begin{enumerate}[label={\rm(\arabic*)}]
\item $\dot{F}$ is surjective. \label{item:rangestrong-sur}
\item $\range(F) = \range(F_B) + \range(F_K)$. \label{item:rangestrong-sum}  
\item $\range(F_B)^{\perp} \cap \range(F_K)^{\perp} = \{O\}$. \label{item:rangestrong-int}
\item $A$ has the SSPWL. \label{item:rangestrong-sspwl}
\end{enumerate}
\end{proposition}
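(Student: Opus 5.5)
The plan is to prove the chain \ref{item:rangestrong-sur}$\Leftrightarrow$\ref{item:rangestrong-sum}$\Leftrightarrow$\ref{item:rangestrong-int}$\Leftrightarrow$\ref{item:rangestrong-sspwl}, using \cref{prop:range} to translate everything into statements about subspaces of the ambient space $\msymz$. Here $\dot F$ denotes the derivative of $F$ at $(B,K)=(O,O)$, viewed as a linear map into $\msymz$. First I would note that $F$ really maps into $\msymz$: $B\bone=\bzero$, and since $K\bone=\bzero$ we have $e^{K}\bone=\bone$, and skew-symmetry gives $\bone\trans e^{-K}=\bone\trans$. Hence $e^{-K}Ae^{K}\bone=e^{-K}A\bone=\bzero$, and the result is symmetric because $e^{K}$ is orthogonal.

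The derivative splits as $\dot F(\dot B,\dot K)=F_B(\dot B)+F_K(\dot K)$. Reading $\range(F)$ in \ref{item:rangestrong-sum} as $\range(\dot F)$, we get $\range(\dot F)=\range(F_B)+\range(F_K)$ directly from linearity. So I interpret \ref{item:rangestrong-sum} as the statement that this sum is all of $\msymz$. Then \ref{item:rangestrong-sur}$\Leftrightarrow$\ref{item:rangestrong-sum} is immediate. For \ref{item:rangestrong-sum}$\Leftrightarrow$\ref{item:rangestrong-int}, I would use the standard identity $(U+W)^\perp=U^\perp\cap W^\perp$ in the finite-dimensional inner product space $\msymz$. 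The sum equals $\msymz$ exactly when its orthogonal complement is $\{O\}$.

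The substantive step is \ref{item:rangestrong-int}$\Leftrightarrow$\ref{item:rangestrong-sspwl}. By \cref{prop:range}\ref{item:range-fbp} and \ref{item:range-fkp}, an element $Y$ of the intersection can be written in both forms $Y=HX_1H$ and $Y=HX_2H$. Here $X_1$ is symmetric with $A\circ X_1=I\circ X_1=O$, and $X_2$ is symmetric with $H(AX_2-X_2A)H=O$. The obstacle is that these representatives may differ. I would resolve it with \cref{cor:hbij}. Write $Y=HX_1H$ with $I\circ X_1=O$. Since $Y\in\range(F_K)^\perp$, the first description in \ref{item:range-fkp} gives $AY=YA$, that is, $AHX_1H=HX_1HA$. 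Because $AH=HA=A$, this is exactly $H(AX_1-X_1A)H=O$. So the single matrix $X_1$ satisfies all the SSPWL conditions.

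This gives both directions. If $A$ has the SSPWL, then $X_1=O$ and hence $Y=O$, which is \ref{item:rangestrong-int}. Conversely, suppose $X\neq O$ witnesses failure of the SSPWL. Then $Y=HXH$ lies in both complements by the same computation. It is nonzero, since the map $X\mapsto HXH$ is injective on matrices with zero diagonal by \cref{cor:hbij}. This contradicts \ref{item:rangestrong-int}.
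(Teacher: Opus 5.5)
Your proposal is correct and follows essentially the same route as the paper: (1)--(3) are equivalent by elementary linear algebra. For (3)$\Leftrightarrow$(4), you write an element of the intersection as $Y=HXH$ with $A\circ X=I\circ X=O$, then use $AY=YA$ and $AH=HA=A$ to obtain $H(AX-XA)H=O$, with \cref{cor:hbij} supplying the nonvanishing of $X$ or $HXH$ in each direction.
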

\begin{proof}
Statements \ref{item:rangestrong-sur}, \ref{item:rangestrong-sum}, and \ref{item:rangestrong-int} are equivalent by applying basic linear algebra facts.

To see that Statements \ref{item:rangestrong-int} and \ref{item:rangestrong-sspwl} are equivalent, we use contrapositive statements.  Suppose $X\in\msym$ is a nonzero matrix such that $A\circ X = I\circ X = O$ and $H(AX-XA)H = O$.  By \cref{cor:hbij}, $HXH\in\msymz$ is a nonzero matrix.  Thus, $HXH$ is a nonzero matrix in $\range(F_B)^{\perp} \cap \range(F_K)^{\perp}$ from the above definitions.

On the other hand, suppose $Y\in\msymz$ is a nonzero matrix in $\range(F_B)^{\perp} \cap \range(F_K)^{\perp}$.  
Since $Y\in\range(F_B)^{\perp}$, there is a matrix $X\in\msym$ such that $A\circ X = I\circ X = O$ and $Y = HXH$.  By \cref{cor:hbij}, $X$ is nonzero.  Since $Y\in\range(F_K)^{\perp}$ and $AH = HA = A$, 
\[
    O = AY - YA = A(HXH) - (HXH)A = H(AX - XA)H.
\]
Therefore, $X$ is a nonzero matrix such that $A\circ X = I\circ X = O$ and $H(AX - XA)H = O$.
\end{proof}

We are now in a position to establish the so-called Supergraph lemma and the Bifurcation lemma associated with this strong matrix property for weighted Laplacian matrices.  Both of these results utilize the inverse function theorem, which is quoted below from \cite[Theorem~2.4]{bifur}.

\begin{theorem}[Inverse function theorem]
\label{thm:inv}
Let $U$ and $W$ be finite-dimensional vector spaces over $\mathbb{R}$.  Let $F$ be a smooth function from an open subset of $U$ to $W$ with $F(\bu_0) = \bw_0$.  If $\dot{F}\Big|_{\bu = \bu_0}$ is surjective, then there is an open subset $W' \subset W$ containing $\bw_0$ and a smooth function $T: W' \rightarrow U$ such that $T(\bw_0) = \bu_0$ and $F\circ T$ is the identity map on the set $W'$.
\end{theorem}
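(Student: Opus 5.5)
The statement is quoted from \cite[Theorem~2.4]{bifur}. My plan is to reduce it to the classical inverse function theorem, the one in which the derivative is a linear \emph{bijection}, by restricting $F$ to a suitable affine slice through $\bu_0$. Let $D = \dot{F}\big|_{\bu=\bu_0}\colon U\to W$, which is surjective by hypothesis. Since $U$ is finite-dimensional, I will choose a linear complement $U_1$ of $\ker D$, so that $U = \ker D\oplus U_1$. Then $D|_{U_1}\colon U_1\to W$ is injective, and it is still surjective because $D(\ker D)=\{\bzero\}$. Hence $D|_{U_1}$ is a linear isomorphism and $\dim U_1 = \dim W$.

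Next I define $G\colon U_1\supseteq V\to W$ by $G(\bu_1) = F(\bu_0+\bu_1)$. Here $V$ is a small open neighborhood of $\bzero$ in $U_1$, chosen so that $\bu_0+V$ lies inside the open domain of $F$. The map $G$ is smooth and satisfies $G(\bzero)=\bw_0$. By the chain rule its derivative at $\bzero$ is $D|_{U_1}$, which is invertible. So the classical inverse function theorem (bijective case) applies to $G$. It gives an open $W'\ni \bw_0$, an open $V'\ni\bzero$ in $U_1$, and a smooth inverse $S\colon W'\to V'$ with $S(\bw_0)=\bzero$ and $G\circ S=\mathrm{id}_{W'}$.

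I will then set $T(\bw) = \bu_0 + S(\bw)$. This $T$ is smooth as a map into $U$, it satisfies $T(\bw_0)=\bu_0$, and
\[
F(T(\bw)) = F(\bu_0 + S(\bw)) = G(S(\bw)) = \bw
\]
for every $\bw\in W'$, which is exactly the required conclusion.

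If the classical bijective case must itself be proved rather than cited, I would fix linear coordinates so that $U_1 = W = \mathbb{R}^m$ and $\bu_0=\bzero$. The standard contraction argument then runs as follows. For $\bw$ near $\bw_0$, consider the map
\[
\phi_{\bw}(\bu) = \bu + D^{-1}\bigl(\bw - G(\bu)\bigr).
\]
By continuity of $\dot G$, on a small closed ball this map has derivative of norm at most $1/2$, so it is a contraction of the ball into itself. The Banach fixed-point theorem then produces a unique solution $S(\bw)$ of $G(S(\bw))=\bw$. Next one shows that $S$ is Lipschitz, then differentiable with $\dot S = (\dot G\circ S)^{-1}$. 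Bootstrapping that formula gives smoothness, since inversion of matrices is smooth.

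The only genuine obstacle is this last regularity step. The reduction to the bijective case is routine linear algebra, and that is the only new point needed beyond the textbook theorem.
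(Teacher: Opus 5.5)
Your proof is correct. Restricting $F$ to $\bu_0 + U_1$, with $U_1$ a complement of $\ker \dot{F}\big|_{\bu=\bu_0}$, turns the surjective derivative into a linear isomorphism, and the classical inverse function theorem gives $S\colon W'\to V'$; then $T=\bu_0+S$ takes values in the domain of $F$ and satisfies $F\circ T=\mathrm{id}_{W'}$. The paper gives no proof of its own and simply quotes the result from \cite[Theorem~2.4]{bifur}, so your reduction to the bijective case is the standard derivation one would supply for that citation.
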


We begin with the Supergraph lemma for weighted Laplacian matrices.

\begin{lemma}[Supergraph lemma]
\label{lem:supergraph}
Let $G$ be a spanning subgraph of the graph $H$.  Suppose $A\in\mptn_L(G)$ has the SSPWL.  Then there is a matrix $A'\in\mptn_L(H)$ such that
\begin{enumerate}[label={\rm(\arabic*)}]
\item $\spec(A') = \spec(A)$, 
\item $A'$ has the SSPWL, and 
\item $\|A' - A\|$ can be chosen arbitrarily small.
\end{enumerate}
\end{lemma}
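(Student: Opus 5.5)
The plan is to apply the inverse function theorem (\cref{thm:inv}) to the map $F(B,K) = e^{-K}Ae^{K} + B$ of \cref{eq:perturb}, with $B\in\vspan\mptn_L(G)$, $K$ skew-symmetric with $K\bone=\bzero$, and codomain $\msymz$. Since $A$ has the SSPWL, \cref{prop:rangestrong} shows that $\dot F$ at $(B,K)=(O,O)$ is surjective onto $\msymz$. \Cref{thm:inv} then provides an open neighborhood $W'\subset\msymz$ of $A$ and a smooth map $T\colon W'\to U$ with $T(A)=(O,O)$ and $F\circ T=\mathrm{id}$ on $W'$.

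For the construction, let $D$ range over the edges in $E(H)\setminus E(G)$ and set $M_\varepsilon = A + \varepsilon\sum_{e\in E(H)\setminus E(G)} L_e$, where $L_e=(\be_i-\be_j)(\be_i-\be_j)\trans$ is the Laplacian of the single edge $e=\{i,j\}$. Then $M_\varepsilon\in\mptn_L(H)\subset\msymz$ for every $\varepsilon>0$. For small $\varepsilon$ we have $M_\varepsilon\in W'$, so we can write $(B_\varepsilon,K_\varepsilon)=T(M_\varepsilon)$, and these tend to $(O,O)$ as $\varepsilon\to 0$. The identity $e^{-K_\varepsilon}Ae^{K_\varepsilon}+B_\varepsilon=M_\varepsilon$ suggests defining
\[
A' = e^{-K_\varepsilon}Ae^{K_\varepsilon} = M_\varepsilon - B_\varepsilon .
\]
Because $K_\varepsilon$ is skew-symmetric, $Q=e^{K_\varepsilon}$ is orthogonal. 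Because $K_\varepsilon\bone=\bzero$, we get $Q\bone=\bone$. Hence $\spec(A')=\spec(A)$ and $A'\in\msymz$.

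Next we check that $A'\in\mptn_L(H)$.
\begin{itemize}
\item Off the edges of $H$: $B_\varepsilon$ is supported on $E(G)$ and the diagonal, so every off-diagonal entry of $A'$ outside $E(H)$ is zero.
\item On edges of $E(H)\setminus E(G)$: the entry of $A'$ equals the entry of $M_\varepsilon$, namely $-\varepsilon<0$.
\item On edges of $E(G)$: the entry is $a_{ij}+\varepsilon\cdot 0-(B_\varepsilon)_{ij}$. Since $a_{ij}<0$ and $B_\varepsilon\to O$, this entry stays negative for small $\varepsilon$.
\item Diagonal: this is forced by $A'\bone=\bzero$.
\end{itemize}
Thus $A'\in\mptn_L(H)$. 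Moreover $\|A'-A\|\le \|M_\varepsilon-A\|+\|B_\varepsilon\|\to 0$, so $A'$ can be made arbitrarily close to $A$.

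It remains to show that $A'$ has the SSPWL, and I expect this to be the main obstacle. I would use an openness argument via the verification matrix. Fix $P$ and consider the SSPWL verification matrix with rows indexed by the non-edges of $H$. Since $E(\overline{H})\subseteq E(\overline{G})$, the matrix $\Psi_A$ computed relative to $H$ consists of a subset of the rows of the full-row-rank matrix $\Psi_A$ for $G$. It therefore has full row rank as well.

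The entries of this verification matrix depend continuously (in fact linearly) on the matrix argument, and full row rank is an open condition. So for $A'$ close enough to $A$, the verification matrix of $A'$ with rows indexed by $E(\overline{H})$ also has full row rank. By the corollary following \cref{def:verification}, $A'$ has the SSPWL.

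The only subtlety is that the SSPWL condition for $A'\in\mptn_L(H)$ quantifies over $X$ supported on $E(\overline{H})$, not $E(\overline{G})$. The row-subset observation handles exactly this point. Shrinking $\varepsilon$ so that all the smallness requirements above hold simultaneously completes the proof.
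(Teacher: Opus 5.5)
Your proposal is correct and follows the paper's proof. You apply the inverse function theorem to $F(B,K)=e^{-K}Ae^{K}+B$, choose $M\in\mptn_L(H)$ near $A$ by giving the new edges small negative weights, and set $A'=e^{-K'}Ae^{K'}=M-B'$. Your row-submatrix argument for why $A'$ keeps the SSPWL makes rigorous the step the paper compresses into ``since this perturbation is small, $A'$ still has the SSPWL'': the non-edges of $H$ are a subset of those of $G$, so the relevant verification matrix at $A$ already has full row rank, and full row rank is an open condition.
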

\begin{proof}
Let $A\in\mptn_L(G)$.  Consider the function $F$ as in \cref{eq:perturb}.  Since $A$ has the SSPWL, $\dot{F}$ is surjective.  By \cref{thm:inv}, for any $M\in\msymz$ close to $A$, there are $B'$ and $K'$ such that $F(B',K') = M$.  Choose $M$ as the matrix obtained from $A$ by perturbing its zero entries in $E(H)\setminus E(G)$ into any small negative values and adjusting the diagonal entries to ensure $M\in\mptn_L(H)$. Thus, $e^{-K'}Ae^{K'} + B' = M$.  When the perturbation from $A$ to $M$ is small enough, the corresponding $B'$ is also small by continuity.  Then $A' = e^{-K'}Ae^K = M - B'$ will have the same pattern as $M$, so $A'\in\mptn_L(H)$. Furthermore, $A'$ has the same spectrum as $A$ by similarity.  Since this perturbation is small, $A'$ still has the SSPWL.
\end{proof}

We now verify the Bifurcation lemma for weighed Laplacian matrices with this strong matrix property.

\begin{lemma}[Bifurcation lemma]
\label{lem:bifurcation}
Let $G$ be a graph.  Suppose $A\in\mptn_L(G)$ has the SSPWL.  Then there is $\epsilon > 0$ such that for any matrix $M\in\msymz$ with $\|M - A\| < \epsilon$, a matrix $A'\in\mptn_L(G)$ exists such that
\begin{enumerate}[label={\rm(\arabic*)}]
\item $\spec(A') = \spec(M)$ and 
\item $A'$ has the SSPWL.
\end{enumerate}
\end{lemma}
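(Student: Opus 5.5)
The plan is to run the same inverse function argument as in the proof of \cref{lem:supergraph}, but applied to $F(B,K) = e^{-K}Ae^{K} + B$ with $B$ now absorbing the perturbation inside $\vspan\mptn_L(G)$. Since $A$ has the SSPWL, \cref{prop:rangestrong} shows that $\dot{F}$ at $(B,K) = (O,O)$ is surjective onto $\msymz$. \Cref{thm:inv} then gives an open neighborhood $W'$ of $A$ in $\msymz$ and a smooth map $T\colon W' \to \vspan\mptn_L(G)\times\{K\in\mskew: K\bone=\bzero\}$ with $T(A) = (O,O)$ and $F\circ T = \mathrm{id}$ on $W'$.

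Given $M\in\msymz$ close to $A$, write $T(M) = (B',K')$, so that $e^{-K'}Ae^{K'} + B' = M$. I would then set
\[
    A' = e^{K'}Me^{-K'} = A + e^{K'}B'e^{-K'},
\]
which is orthogonally similar to $M$, so $\spec(A') = \spec(M)$. The difficulty is that $e^{K'}B'e^{-K'}$ need not lie in $\vspan\mptn_L(G)$: conjugation destroys the zero pattern. So this direct choice is wrong, and I will set up the map the other way.

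The better setup is to use instead $\widetilde{F}(B,K) = e^{-K}(A+B)e^{K}$, with the same domain and codomain $\msymz$; note $e^{K}\bone=\bone$ because $K\bone=\bzero$. Its derivative at $(O,O)$ is again $(\dot B,\dot K)\mapsto \dot B + \dot K\trans A + A\dot K$, the same linear map as $\dot F$, so it is surjective by \cref{prop:rangestrong}. Applying \cref{thm:inv} to $\widetilde{F}$, each $M\in W'$ has a preimage $(B',K') = T(M)$ with $e^{-K'}(A+B')e^{K'} = M$. Set $A' = A + B'\in\vspan\mptn_L(G)$. Then $A'$ is orthogonally similar to $M$, so $\spec(A')=\spec(M)$.

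It remains to check that $A'\in\mptn_L(G)$ and that $A'$ has the SSPWL, for $\epsilon$ small. By continuity of $T$ and $T(A)=(O,O)$, choose $\epsilon$ so that $\|M-A\|<\epsilon$ forces $M\in W'$ and $\|B'\|$ small. Two facts then finish the argument.
\begin{enumerate}[label={\rm(\arabic*)}]
\item The off-diagonal entries of $A$ on $E(G)$ are strictly negative. A small $B'$ supported on the same pattern, with zero row sums, keeps them negative, so $A'\in\mptn_L(G)$.
\item The SSPWL is an open condition within $\mptn_L(G)$, since it is full row rank of $\Psi_{A'}$, whose entries depend continuously (linearly) on $A'$ for a fixed $P$. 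Shrinking $\epsilon$ further therefore preserves the SSPWL.
\end{enumerate}

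The main obstacle is exactly the one noted above: choosing the parametrization so that the pattern perturbation happens before conjugation. Once that is done, what is left is verifying that the derivative of $\widetilde F$ coincides with that of $F$, which is a routine first-order expansion of $e^{-K}(A+B)e^{K}$ as in \cite[Example~2.1]{bifur}.
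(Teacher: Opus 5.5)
Your final argument is correct and is essentially the paper's proof: you apply the inverse function theorem to $\widetilde{F}(B,K)=e^{-K}(A+B)e^{K}$, whose derivative at $(O,O)$ coincides with that of $F$, and take $A'=A+B'=e^{K'}Me^{-K'}$. Your explicit justification that $A'$ keeps the SSPWL, namely that full row rank of $\Psi_{A'}$ is an open condition, fills in a step the paper's proof leaves implicit.
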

\begin{proof}
Consider the function $\tilde{F}(B,K) = e^{-K}(A + B)e^K$.  Observe that $\tilde{F}$ and $F$ are the same when $B = O$ or $K = O$.  Then $\tilde{F}$ also satisfies \cref{prop:range,cor:dim,prop:rangestrong}.  Since $A$ has the SSPWL, the derivative of $\tilde{F}$ is surjective.  By \cref{thm:inv}, for any $M\in\msymz$ close to $A$, there are $B'$ and $K'$ such that $\tilde{F}(B',K') = M$.  Thus, $e^{-K'}(A + B')e^{K'} = M$.  Then, since $B'$ is a small perturbation of $A$, it follows that $A' = A + B' = e^{K'}Me^{-K'}$ is in $\mptn_L(G)$ and has the same spectrum as $M$.  
\end{proof}

\begin{corollary}
\label{cor:tree-SSPWL}
Let $G$ be a tree.  Then $A\in\mptn_L(G)$ has the SSPWL implies that $\spec(A)$ is composed of distinct elements.
\end{corollary}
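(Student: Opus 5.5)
We argue by dimension counting, which is the cleanest route here. Suppose $A\in\mptn_L(G)$ has the SSPWL, with $G$ a tree on $n$ vertices. By \cref{prop:rangestrong}, $\range(F_B)^{\perp}\cap\range(F_K)^{\perp}=\{O\}$ inside the ambient space $\msymz$. Two subspaces that intersect trivially have dimensions summing to at most the ambient dimension, so
\[
    \dim(\range(F_B)^{\perp}) + \dim(\range(F_K)^{\perp}) \le \dim(\msymz) = \binom{n}{2}.
\]

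Next we substitute the dimensions computed in \cref{cor:dim}. Since $G$ is a tree, $|E(G)|=n-1$ and hence $|E(\overline{G})|=\binom{n}{2}-(n-1)$. Write $\spec(A)=\{\lambda_1^{(m_1)},\ldots,\lambda_q^{(m_q)}\}$ with $0=\lambda_1<\cdots<\lambda_q$. A tree is connected, so the nullity of $A$ is $1$, giving $m_1=1$ and $\binom{m_1}{2}=0$. The inequality then becomes
\[
    \binom{n}{2}-(n-1)+\sum_{i=2}^q\binom{m_i+1}{2}\le\binom{n}{2},
    \quad\text{that is,}\quad
    \sum_{i=2}^q\binom{m_i+1}{2}\le n-1 = \sum_{i=2}^q m_i .
\]

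Finally we compare term by term. For each $i\ge 2$ we have $\binom{m_i+1}{2}=\frac{m_i(m_i+1)}{2}\ge m_i$, with equality if and only if $m_i=1$. Because the sums agree with the inequality going the other way, every term must be an equality, so $m_i=1$ for all $i$. Together with $m_1=1$, this shows all eigenvalues of $A$ are distinct.

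There is no real obstacle in this argument; the only care needed is to use the connectedness of the tree to get $m_1=1$.
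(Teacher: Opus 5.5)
Your proof is correct and follows essentially the same dimension-counting argument as the paper, which combines \cref{prop:rangestrong} with \cref{cor:dim}. The paper uses the equivalent form that the two ranges must span $\msymz$, while you use the form that the orthogonal complements intersect trivially. Your explicit final step, using $n-1=\sum_{i\ge 2}m_i$ and $\binom{m_i+1}{2}\ge m_i$ with equality iff $m_i=1$, spells out what the paper states only as ``this happens only when $m_i=1$.''
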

\begin{proof}
Let $n = |V(G)|$ and $m = |E(G)|$.  Let $A\in\mptn_L(G)$ be a matrix with the SSPWL and $F$ the corresponding function as in \cref{eq:perturb}.  According to \cref{prop:rangestrong}, a necessary condition for $A$ to have the SSPWL is that 
\[
    \dim(\range(F_B)) + \dim(\range(F_K)) \geq \dim(\msymz) = \binom{n}{2}.
\]
By \cref{cor:dim}, we have 
\[
    \dim(\range(F_B)) = m \text{ and } \dim(\range(F_K)) = \binom{n}{2} - \binom{m_1}{2} + \sum_{i=2}^q\binom{m_i + 1}{2}.
\]
Since $G$ is a tree, $m = n - 1$.  Since $G$ is connected, $m_1 = 1$.  Therefore, the necessary condition becomes
\[
    n - 1 + \binom{n}{2} - \sum_{i=2}^q\binom{m_i + 1}{2} \geq \binom{n}{2},
\]
which is equivalent to $n - 1 \geq \sum_{i=2}^q\binom{m_i + 1}{2}$.  This happens only when $m_i = 1$ for all $i$.
\end{proof}

\begin{example}
Let $G=K_4-e$.  The matrix  
\[
   A= \begin{bmatrix}
        4 & 0 & -3 & -1 \\
        0 & 4 & -3 & -1 \\
        -3 & -3 & 7 & -1 \\
        -1 & -1 & -1 & 3
    \end{bmatrix}
\]
is in $\mptn_L(K_4-e)$ and has the spectrum $\{0,4,4,10\}$. By direct computation, or referring to \cref{sec:iepl4}, it follows that $A$ has the SSPWL, and hence, by the Supergraph lemma (\cref{lem:supergraph}) there exists a matrix $A'$ in $\mptn_L(K_4)$ with spectrum $\{0,4,4,10\}$. 

On the other hand, the combinatorial Laplacian matrix $L=nI-J$ is in $\mptn_L(K_n)$, has spectrum equal to $\{0, n^{(n-1)}\}$, and has the SSPWL. Hence, it follows from an application of the Bifurcation lemma (\cref{lem:bifurcation}) that there exists an $A' \in \mptn_L(K_n)$ with the spectrum of $A'$ being $\{0, \lambda_{1}^{(m_1)}, \lambda_{2}^{(m_2)}, \ldots, \lambda_{q}^{(m_q)}\}$ for any $\lambda_i$ close to the original eigenvalue $n$, where $1 \leq q \leq n-1$ and with $\sum_i m_i = n-1$.
\end{example}

\section{Equivalence to the Jacobian Method}
\label{sec:jac}

In this section, we introduce the Jacobian Method for verifying the SSPWL and show that for every connected graph $G$, there is a matrix in $\mptn_L(G)$ with the SSPWL.  The idea of the Jacobian Method for inverse eigenvalue problems was adopted from the sign pattern problem \cite{DJOvdD00,GB13,bifur}, and a key idea, to be verified, is to certify that the nonzero entries of a matrix have full control of the coefficients of the characteristic polynomial and consequently the spectrum.  

We start with an example to demonstrate the Jacobian Method before proceeding with the theoretical details.

\begin{example}
\label{ex:k14jm}
Let $G = K_{1,4}$ and 
\[
    A = \begin{bmatrix}
    \sum_{i=1}^4 w_i & -w_1 & -w_2 & -w_3 & -w_4 \\
    -w_1 & w_1 & 0 & 0 & 0 \\
    -w_2 & 0 & w_2 & 0 & 0 \\
    -w_3 & 0 & 0 & w_3 & 0 \\
    -w_4 & 0 & 0 & 0 & w_4 \\
    \end{bmatrix}.
\]
Let $s_k = s_k(A)$ be the sum of all $k\times k$ minors of $A$.  By direct computation, we have 
\[
    \begin{aligned}
        s_1 &= 2(w_1 + w_2 + w_3 + w_4), \\
        s_2 &= 3(w_1w_2 + w_1w_3 + w_1w_4 + w_2w_3 + w_2w_4 + w_3w_4), \\
        s_3 &= 4(w_1w_2w_3 + w_1w_2w_4 + w_1w_3w_4 + w_2w_3w_4), \\
        s_4 &= 5w_1w_2w_3w_4, \\
    \end{aligned}
\]
and 
\[
    \det(A - xI) = s_0(-x)^5 + s_1(-x)^4 + \cdots + s_5.
\]
Note that $s_1 = 1$ and $s_5 = 0$. Then, the Jacobian matrix $J_A = \begin{bmatrix} \frac{\partial s_i}{\partial w_j} \end{bmatrix}$ is 
\[
    \begin{bmatrix}
        2 & 2 & 2 & 2 \\
        3(w_2 + w_3 + w_4) & \cdots & \cdots & 3(w_1 + w_2 + w_3) \\
        4(w_2w_3 + w_2w_4 + w_3w_4) & \cdots & \cdots & 4(w_1w_2 + w_1w_3 + w_2w_3) \\
        5w_2w_3w_4 & 5w_1w_3w_4 & 5w_1w_2w_4 & 5w_1w_2w_3
    \end{bmatrix}.
\]
If $J_A$ is invertible with some positive $w_1, \ldots, w_4$, then the inverse function theorem implies that $w_i$'s have full control of $s_k$'s, which has the same effect as the Supergraph lemma (\cref{lem:supergraph}) and the Bifurcation lemma (\cref{lem:bifurcation}).  
\end{example}

The graph $K_{1,4}$ has $m = 4$ edges and $\overline{m} = 6$ missing edges.  Note that the Jacobian matrix in \cref{ex:k14jm} is of order $m = 4$, while the verification matrix is of order $\overline{m} = 6$.  At first glance, the Jacobian Method and the SSPWL appear rather different, but they lead to  the same conclusion.  In the following, we show that they are equivalent provided that $A$ has distinct eigenvalues.

Given a square matrix $M\in\mat$, we define $s_k(M)$ to be the sum of all its $k\times k$ principal minors.  Then the  characteristic polynomial of $M$ can be written as  
\[
    \det(M - xI) = s_0(M)(-x)^n + s_1(M)(-x)^{n-1} + \cdots + s_n(M).
\]
For each $k$, define the matrix $\nabla s_k(M)$ whose $i,j$-entry is the partial derivative of $s_k(M)$ with respect to the $i,j$-entry.  (Note that here $M$ is not necessarily symmetric and the $i,j$-entry and the $j,i$-entry are treated as different variables.)  If $M(t)$ is a trajectory of matrices where $s_k(M(t))$ is constant, then by the chain rule, we have 
\[
    0 = \frac{d}{dt}s_k(M(t)) = \inp{\nabla s_k(M(0))}{\dot{M}(0)}.
\]
That is, $\nabla s_k(M(0))$ is orthogonal to the tangent of $M(t)$ at $t = 0$. 

\begin{example}
\label{ex:nsymortho}
Let 
\[
    M = \begin{bmatrix}
        1 & \fbox{2} & 3 \\
        4 & 5 & 6 \\
        7 & 8 & 9
    \end{bmatrix}
    \text{ and }
    E(t) = \begin{bmatrix}
        1 & 0 & 0 \\
        t & 1 & 0 \\
        0 & 0 & 1
    \end{bmatrix}.
\]
For $M$, we have 
\[
    s_2 = (1\cdot 5 - \fbox{2}\cdot 4 ) +  (5\cdot 9 - 6\cdot 8)  +  (1\cdot 9 - 3\cdot 7).
\]
By changing the entry with value $\fbox{2}$, $s_2$ will change correspondingly with a ratio of $-4$.  Following a similar idea for all other entries gives
\[
    \nabla s_2(M) = \begin{bmatrix}
        14 & -4 & -7 \\
        -2 & 10 & -8 \\
        -3 & -6 & 6
    \end{bmatrix}.
\]
Consider the matrix $M(t) = E(t)^{-1} ME(t)$ with $M(0) = M$.  Since similarity does not alter the characteristic polynomial, $s_2$ is constant.  By direct computation, we have 
\[
    M(t) = \begin{bmatrix}
        1+2t & 2 & 3 \\
        4+4t-2t^2 & 5-2t & 6-3t \\
        7+8t & 8 & 9
    \end{bmatrix}
    \text{ and }
    \dot{M}(0) = \begin{bmatrix}
        2 & 0 & 0 \\
        4 & -2 & -3 \\
        8 & 0 & 0
    \end{bmatrix}.
\]
As expected, we have $\inp{\nabla s_2(M)}{\dot{M}(0)} = 0$.
\end{example}

The following proposition, which was proved in \cite{GB12}, provides an interesting and systematic way to compute $\nabla s_k$.  We use the notation $\nabla\det(M - xI)$ to denote a matrix whose $i,j$-entry is the partial derivative of $\det(M - xI)$ with respect to the $i,j$-entry. Let $X\cof$ be the cofactor matrix of $X$, the transpose of the adjugate matrix $X\adj$. 

\begin{proposition}
\label{prop:gradientadj}
Let $M\in\mat$ and $\nabla s_k = \nabla s_k(M)$ for $k = 1, \ldots, n$.  Then the total derivative of $\det(M - xI)$ with respect to the entries is 
\[
    \nabla\det(M - xI) = \nabla s_1(-x)^{n-1} + \nabla s_2(-x)^{n-2} + \cdots + \nabla s_n = (M - xI)\cof.
\] 
\end{proposition}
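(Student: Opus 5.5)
Two facts drive the argument: Jacobi's formula for the derivative of a determinant, and the observation that $\det(M - xI)$ is a polynomial in $x$ whose coefficients are the $s_k(M)$. We establish the second equality first and then extract the first by comparing coefficients.

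\textbf{Second equality.} Fix $x$ and set $Y = M - xI$. The determinant is multilinear in the rows, so expanding along row $i$ gives
\[
\det Y = \sum_j y_{i,j}\, C_{i,j}(Y),
\]
where $C_{i,j}(Y) = (-1)^{i+j}\det Y(i\mid j)$ is the $(i,j)$ cofactor. The minor $Y(i\mid j)$ does not involve the entry $y_{i,j}$. Hence
\[
\frac{\partial \det Y}{\partial y_{i,j}} = C_{i,j}(Y),
\]
that is, the gradient of $\det$ at $Y$ is $Y\cof$. The map $M\mapsto M - xI$ is a translation, so $\partial y_{i,j}/\partial m_{i,j} = 1$, and the chain rule gives $\nabla\det(M - xI) = (M - xI)\cof$.

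\textbf{First equality.} By the expansion recalled before the proposition,
\[
\det(M - xI) = \sum_{k=0}^{n} s_k(M)(-x)^{n-k}.
\]
The variable $x$ is independent of the entries of $M$. Differentiating term by term with respect to $m_{i,j}$ therefore gives
\[
\nabla\det(M - xI) = \sum_{k=0}^{n}\nabla s_k(M)\,(-x)^{n-k}.
\]
The $k = 0$ term drops out because $s_0 = 1$ is constant, which leaves exactly $\sum_{k=1}^{n}\nabla s_k(-x)^{n-k}$.

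The only step needing care is justifying that expansion of the characteristic polynomial, and in particular that each $s_k$ is a polynomial in the entries of $M$. Write $M - xI = M + tI$ with $t = -x$ and expand by multilinearity in the columns. Each column is either the corresponding column of $M$ or $t\be_j$. Choosing the identity columns on an index set $S$ of size $n-k$ contributes $t^{n-k}$ times the principal minor of $M$ on the complementary index set of size $k$. Summing over $S$ gives $s_k(M)\,t^{n-k}$, so each $s_k$ is a polynomial in the entries and termwise differentiation is legitimate.

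The two expressions for $\nabla\det(M - xI)$ are equal for every real $x$, and both are matrix-valued polynomials in $x$. Comparing coefficients of $(-x)^{n-k}$ additionally gives $\nabla s_k$ as the coefficient of $(-x)^{n-k}$ in $(M - xI)\cof$, which is the form used later for computation. None of the steps is a serious obstacle.
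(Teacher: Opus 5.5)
The paper gives no proof of this proposition; it attributes the result to \cite{GB12} and only illustrates it with an example. Your argument is a correct, complete, self-contained proof. The row expansion gives Jacobi's formula $\partial\det Y/\partial y_{i,j}=C_{i,j}(Y)$. The translation $M\mapsto M-xI$ leaves partial derivatives in the entries of $M$ unchanged. Differentiating the identity $\det(M-xI)=\sum_{k=0}^n s_k(M)(-x)^{n-k}$ termwise then gives the first equality, with $\nabla s_0=O$; your column-multilinearity argument is what justifies that identity. One sentence should be tightened. The conclusion $\partial\det Y/\partial y_{i,j}=C_{i,j}(Y)$ needs \emph{every} cofactor $C_{i,k}(Y)$ in the row-$i$ expansion to be free of $y_{i,j}$, not only $C_{i,j}(Y)$. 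This holds because each of them deletes row $i$. Compared with citing \cite{GB12}, your route costs a few lines. In return it makes the later step in \cref{lem:gradientind}, which reads $\nabla s_k$ off as the coefficients of $(M-xI)\cof$, fully self-contained.
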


\begin{example}
Let $M$ be as in \cref{ex:nsymortho}.  Direct computation yields
\[
(M - xI)\cof =    \begin{bmatrix}
        (5-x)(9-x)-48 & -(4(9-x)-42) & 32-7(5-x) \\
        -(2(9-x)-24) & (1-x)(9-x)-21 & -(8(1-x)-14) \\
        12-3(5-x) & -(6(1-x)-12) & (1-x)(5-x)-8
    \end{bmatrix},
\]
which can be expanded as 
\[
    (-x)^2\begin{bmatrix}
        1 & 0 & 0 \\
        0 & 1 & 0 \\
        0 & 0 & 1
    \end{bmatrix} + 
    (-x)^1\begin{bmatrix}
        14 & -4 & -7 \\
        -2 & 10 & -8 \\
        -3 & -6 & 6
    \end{bmatrix} + 
    (-x)^0\begin{bmatrix}
        -3 & 6 & -3 \\
        6 & -12 & 6 \\
        -3 & 6 & -3
    \end{bmatrix}.
\]
Thus,  $\nabla s_2(M)$ is  shown as the coefficient of $(-x)^1$.  
\end{example}

\begin{lemma}
\label{lem:gradientind}
Let $M\in\mat$ and $\nabla s_k = \nabla s_k(M)$ for $k = 1, \ldots, n$.  Then $
\{\nabla s_1, \ldots, \nabla s_n\}$ and $\{I, M\trans, \ldots, (M\trans)^{n-1}\}$ span the same subspace.  Consequently, $
\{\nabla s_1, \ldots, \nabla s_n\}$ is linearly independent if and only if the minimal polynomial of $M$ has degree $n$.  
\end{lemma}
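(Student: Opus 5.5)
We will read off the gradients $\nabla s_k$ from \cref{prop:gradientadj} by expressing the cofactor matrix $(M - xI)\cof$ as a polynomial in $M\trans$. The key identity is Cayley--Hamilton in adjugate form. Write the characteristic polynomial as
\[
p(x) = \det(xI - M) = x^n + c_1x^{n-1} + \cdots + c_n .
\]
Then
\[
(xI - M)\adj = \sum_{j=0}^{n-1} x^{j}\, B_j,\qquad B_j = M^{n-1-j} + c_1 M^{n-2-j} + \cdots + c_{n-1-j} I .
\]
This holds because $p(x) - p(M)$ is divisible by $x - M$ in the commutative ring $\mathbb{R}[M][x]$; equivalently, the coefficients of $(xI-M)\adj$ satisfy the standard Faddeev--LeVerrier recursion.

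Next we convert signs and transpose. Since $(M - xI)\adj = (-1)^{n-1}(xI - M)\adj$, and the cofactor matrix is the transpose of the adjugate, \cref{prop:gradientadj} gives
\[
\sum_{k=1}^n \nabla s_k\,(-x)^{n-k} = (-1)^{n-1}\sum_{j=0}^{n-1} x^j\, B_j\trans .
\]
Comparing coefficients of $x^{n-k}$ yields $\nabla s_k = \pm B_{n-k}\trans$, where
\[
B_{n-k}\trans = (M\trans)^{k-1} + c_1 (M\trans)^{k-2} + \cdots + c_{k-1} I .
\]
So $\nabla s_k$ equals $\pm (M\trans)^{k-1}$ plus a combination of lower powers. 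The transition matrix from $\{I, M\trans, \ldots, (M\trans)^{n-1}\}$ to $\{\nabla s_1, \ldots, \nabla s_n\}$ is therefore triangular with diagonal entries $\pm 1$, hence invertible. This shows that the two sets span the same subspace, and a triangular induction on $k$ gives the equality of spans for every initial segment as well.

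For the consequence, both families have $n$ elements and the same span. So $\{\nabla s_1,\ldots,\nabla s_n\}$ is linearly independent if and only if $I, M\trans, \ldots, (M\trans)^{n-1}$ are linearly independent. That independence is equivalent to no nonzero polynomial of degree at most $n-1$ annihilating $M\trans$, i.e.\ to the minimal polynomial of $M\trans$ having degree $n$. Since $M$ and $M\trans$ have the same minimal polynomial, this finishes the proof.

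The only step needing care is the adjugate expansion and the bookkeeping of signs and the transpose; the signs do not affect the spans. If justifying the adjugate identity over $\mathbb{R}$ seems delicate, we can fall back on verifying $(xI - M)\sum_j x^j B_j = p(x)I$ directly by telescoping, using $p(M) = O$ for the constant term, and then invert $xI - M$ for all but finitely many $x$.
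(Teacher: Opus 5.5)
Your proof is correct, but it reaches the triangular relation by a different mechanism than the paper.

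The paper applies Cayley--Hamilton to $M_x = M - xI$ over $\mathbb{R}(x)$. This writes $M_x\cof$ as a polynomial in $-M_x\trans$ whose coefficients $p_i(x)$ themselves depend on $x$. The paper then re-expands in powers of $M\trans$ with coefficients $r_k(x)$. Before it can conclude that the transition is invertible, it must show that $r_k$ has exact degree $k$, with leading term $(-1)^{n-1}x^k$, via a binomial convolution identity.

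You instead use the adjugate expansion $(xI-M)\adj = \sum_j x^j B_j$. Its matrix coefficients are polynomials in $M$ with \emph{constant} coefficients $c_i$ taken from the characteristic polynomial of $M$. Comparing coefficients of $x^{n-k}$ then gives directly
\[
\nabla s_k = (-1)^{k-1}\bigl((M\trans)^{k-1} + c_1 (M\trans)^{k-2} + \cdots + c_{k-1} I\bigr),
\]
so the transition matrix is triangular with diagonal entries $\pm 1$ and needs no degree bookkeeping. As a sanity check, for $k=2$ this gives $\nabla s_2 = \tr(M)I - M\trans$, matching \cref{ex:nsymortho}.

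What your route buys is an explicit closed formula for each gradient. It also gives equality of spans for every initial segment: $\{\nabla s_1,\ldots,\nabla s_k\}$ and $\{I,\ldots,(M\trans)^{k-1}\}$ span the same subspace. Its only extra cost is justifying the adjugate expansion. Your telescoping check handles this cleanly: $(xI-M)\sum_j x^j B_j = p(x)I$ follows from $B_j = MB_{j+1} + c_{n-1-j}I$ and $p(M)=O$. The paper's route needs only the inverse form of Cayley--Hamilton, but pays for it with the binomial computation.
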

\begin{proof}
To simplify notation, let $M_x = M - xI$, treated as a matrix over the field of rational functions $\mathbb{R}(x)$, and consider its characteristic polynomial 
\[
    \det(M_x - tI) = p_0(x)(-t)^n + p_1(x)(-t)^{n-1} + \cdots + p_n(x).
\]
By the Cayley--Hamilton theorem, the inverse of $M_x$ can be expressed as 
\[
    M_x^{-1} = \frac{1}{p_n(x)}(p_0(x)(-M_x)^{n-1} + p_1(x)(-M_x)^{n-2} + \cdots + p_{n-1}(x)I).
\]
Now, it is known that 
\[
    M_x\cof = (M_x\adj)\trans = \det(M_x)(M_x\trans)^{-1}.
\]
Thus, since $p_0(x) = 1$ and $p_n(x) = \det(M_x)$, we have 
\[
    M_x\cof = (-M_x\trans)^{n-1} + p_1(x)(-M_x\trans)^{n-2} + \cdots + p_{n-1}(x)I.
\]
Expanding the powers of $-M_x\trans$ on the right-hand side, we may write 
\[
    M_x\cof = r_0(x)(M\trans)^{n-1} + r_1(x)(M\trans)^{n-2} + \cdots + r_{n-1}(x)I
\]
for some coefficients $r_0(x), \dots, r_{n-1}(x)$.
Equating this expression  for $M_x\cof$ with the formula in \cref{prop:gradientadj}, each element in $\{\nabla s_1, \ldots, \nabla s_n\}$ is in the span of $\{I, M\trans, \ldots, (M\trans)^{n-1}\}$.  

On the other hand, we claim that $r_k(x)$ is a polynomial of degree $k$.  This would imply that $\{r_0(x), \ldots, r_{n-1}(x)\}$ is a basis of the space of polynomials of degree less than $n$, and  therefore, each element of $\{1, (-x), \ldots, (-x)^{n-1}\}$ can be written as a linear combination of $\{r_0(x), \ldots, r_{n-1}(x)\}$.  Consequently, by comparing the two expressions for  $M_x\cof $, each element in $\{I, M\trans, \ldots, (M\trans)^{n-1}\}$ is  in the span of $\{\nabla s_1, \ldots, \nabla s_n\}$, showing that the  two sets  have the same span.   We now examine the leading term of $r_k(x)$.    By direct computation, 
\[
    r_k(x) = \sum_{i=0}^{k} p_i(x) \cdot (-1)^{n-1-i} \binom{n-1-i}{k-i}(-x)^{k-i}.
\]
Note that $p_i(x)$ is the sum of all $i\times i$ principal minors of $M_x$, so its leading term is $\binom{n}{i}(-x)^i$.  Thus, the leading term of $r_k(x)$ is 
\[
    (-x)^{k}\sum_{i=0}^{k} \binom{n}{i} \cdot (-1)^{n-1-i} \binom{n-1-i}{k-i} = (-1)^{n-1}x^{k}\sum_{i=0}^{k} \binom{n}{i}\binom{-(n-k)}{k-i}.
\]
Observe that the summation is the coefficient of $y^k$ in $(1+y)^n(1+y)^{-(n-k)} = (1+y)^k$, which is $1$.   Therefore, the leading term of $r_k(x)$ is $(-1)^{n-1}x^k$.  This completes our argument showing that $
\{\nabla s_1, \ldots, \nabla s_n\}$ and $\{I, M\trans, \ldots, (M\trans)^{n-1}\}$ span the same subspace.

If the minimal polynomial is of degree less than $n$, then the dimension of the span of $\{I, M\trans, \ldots, (M\trans)^{n-1}\}$ is less than $n$, indicating that $\{\nabla s_1, \ldots, \nabla s_n\}$ is linearly dependent.  If  the minimal polynomial is of degree $n$, then the dimension of the span of $\{I, M\trans, \ldots, (M\trans)^{n-1}\}$ is $n$, implying that $\{\nabla s_1, \ldots, \nabla s_n\}$ is a linearly independent set.
\end{proof}

We need one more proposition before we provide the connection between strong properties and the Jacobian matrix.  

\begin{proposition}
\label{prop:tnequiv}
Let $T_1$ and $T_2$ be subspaces of a finite dimensional ambient space $W$.  Let $\alpha$ be a basis of $T_2$ and $\beta$ a basis of $N_2 = T_2^\perp$.  Then the following statements are equivalent.  
\begin{enumerate}[label={\rm(\arabic*)}]
\item $T_1 + T_2 = W$. \label{lab:tnqeuiv-sum}  
\item The orthogonal projection from $N_2$ to $T_1$ is injective.  \label{lab:tnqeuiv-inj}
\item The matrix $\begin{bmatrix} \inp{\ba_i}{\bb_j} \end{bmatrix}$ for $\ba_i\in\alpha$ and $\bb_j\in\beta$ has full column rank. \label{lab:tnqeuiv-mtx}
\end{enumerate}
\end{proposition}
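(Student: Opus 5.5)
The plan is to prove the cycle \ref{lab:tnqeuiv-sum}$\Leftrightarrow$\ref{lab:tnqeuiv-inj} and \ref{lab:tnqeuiv-inj}$\Leftrightarrow$\ref{lab:tnqeuiv-mtx} by elementary linear algebra. Throughout, $\perp$ is taken inside $W$, and $\proj_{T_1}$ denotes the orthogonal projection onto $T_1$.

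For \ref{lab:tnqeuiv-sum}$\Leftrightarrow$\ref{lab:tnqeuiv-inj}, take orthogonal complements: $T_1+T_2=W$ if and only if $(T_1+T_2)^\perp = T_1^\perp\cap T_2^\perp = T_1^\perp\cap N_2 = \{\bzero\}$. The kernel of the projection $N_2\to T_1$, $\bu\mapsto \proj_{T_1}\bu$, is exactly the set of $\bu\in N_2$ orthogonal to $T_1$, that is, $N_2\cap T_1^\perp$. Hence injectivity of the projection is equivalent to $N_2\cap T_1^\perp=\{\bzero\}$, which is \ref{lab:tnqeuiv-sum}.

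For \ref{lab:tnqeuiv-mtx}, the statement must be read with $\alpha$ a basis of $T_1$ (not $T_2$). Otherwise $\inp{\ba_i}{\bb_j}=0$ identically and the statement is false, so I will note that the "$T_2$" in the definition of $\alpha$ should be $T_1$. With this reading, write $\alpha=\{\ba_1,\dots,\ba_p\}$ and $\beta=\{\bb_1,\dots,\bb_q\}$, and let $C=\begin{bmatrix}\inp{\ba_i}{\bb_j}\end{bmatrix}$, a $p\times q$ matrix. Take a vector $\mathbf{c}\in\mathbb{R}^q$ and set $\bu=\sum_j c_j\bb_j\in N_2$. Then
\[
    (C\mathbf{c})_i = \inp{\ba_i}{\bu} \quad\text{for } i=1,\dots,p.
\]
So $C\mathbf{c}=\bzero$ if and only if $\bu\perp T_1$, because $\alpha$ spans $T_1$, and this holds if and only if $\proj_{T_1}\bu=\bzero$. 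Since $\beta$ is a basis of $N_2$, $\mathbf{c}\mapsto\bu$ is a bijection from $\mathbb{R}^q$ onto $N_2$. Therefore $\ker C=\{\bzero\}$ exactly when the projection is injective on $N_2$, and trivial kernel is the same as full column rank. This gives \ref{lab:tnqeuiv-inj}$\Leftrightarrow$\ref{lab:tnqeuiv-mtx}.

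None of these steps presents a real obstacle; the only delicate point is the basis convention in \ref{lab:tnqeuiv-mtx}, which must be taken as $\alpha\subset T_1$ for the statement to make sense. It also helps to note where this is used later. With $T_1$ the span of the pattern perturbations and $N_2$ spanned by the gradients $\nabla s_k$, the matrix in \ref{lab:tnqeuiv-mtx} is the Jacobian $J_A$. By \cref{lem:gradientind} those gradients are independent precisely when the minimal polynomial has degree $n$, which is how this proposition connects to the Jacobian Method.
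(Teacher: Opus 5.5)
Your proof is correct and follows essentially the paper's route. For (1)$\Leftrightarrow$(2) you use $T_1^\perp\cap N_2=\{\bzero\}$, a step the paper only calls straightforward. For (2)$\Leftrightarrow$(3) you identify the kernel of the matrix in (3) with the kernel of the projection, whereas the paper writes the projection as the inverse Gram matrix of $\alpha$ times that matrix; the underlying idea is the same.

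You are also right that $\alpha$ must be a basis of $T_1$, not $T_2$. The paper's proof and its later applications (with $\{E_{e_j}\}$ spanning $T_1$) tacitly use exactly this reading.
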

\begin{proof}
The equivalence between \ref{lab:tnqeuiv-sum} and \ref{lab:tnqeuiv-inj} is straightforward.  Next, we verify that \ref{lab:tnqeuiv-inj} and \ref{lab:tnqeuiv-mtx} are equivalent.  Let $A = \begin{bmatrix} \inp{\ba_i}{\ba_j} \end{bmatrix}$ for $\ba_i,\ba_j\in\alpha$ and $P = \begin{bmatrix} \inp{\ba_i}{\bb_j} \end{bmatrix}$ for $\ba_i\in\alpha$ and $\bb_j\in\beta$.  Note that $A$ is invertible since $\alpha$ is a basis.  Thus, $A^{-1}P$ is the matrix representation of the orthogonal projection from $N_2$ to $T_1$ with respect to the bases $\beta$ and $\alpha$.  Therefore, the projection is injective if and only if $A^{-1}P$ has full column rank.  Since $A$ is invertible, $A^{-1}P$ has full column rank if and only if $P$ has full column rank.
\end{proof}

\subsection{Jacobian Method and the SSPWL}

Given a matrix $A\in\mptn_L(G)\subseteq\msymz$, the SSPWL considers two sets:  $\mathcal{M}_1 = \mptn_L(G)$ and  $\mathcal{M}_2$,  the set of all matrices of the form $Q\trans AQ$, where $Q$ is any orthogonal matrix with $Q\bone = \bone$.  The Jacobian Method relies on the Jacobian matrix 
\[
    J_A = \begin{bmatrix}
        \frac{d(s_1,\ldots, s_{n-1})}{d({\rm free\ entries})}
    \end{bmatrix}.
\]
Note that $s_n = 0$ for any matrix in $\mptn_L(G)$ and $\frac{d s_n(A(t))}{dt} = 0$ if $A(t)\in\mptn_L(G)$ for all $t$, so the Jacobian matrix has fewer rows in the weighted Laplacian problem.  Also note that the free entries here indicate the changes $A + tE_{e_j}$ where $e_j$ is an edge of $G$ and $E_{e_j}$ is the matrix whose $2\times 2$ induced principal submatrix on $e_j$ is 
\[
    \begin{bmatrix}
        1 & -1 \\
        -1 & 1
    \end{bmatrix}
\]
and other entries are zero.  By the chain rule, 
\[
    J_A = \begin{bmatrix}
        \inp{\nabla s_i}{E_{e_j}}
    \end{bmatrix}.
\]

For SSPWL, we need one extra fact to reach our goal, utilizing properties of symmetric matrices.  Recall that $H = I - \frac{1}{n}J$.  

\begin{proposition}
\label{prop:gradientprojection}
Let $G$ be a connected graph and $A\in\mptn_L(G)$.  Let $\nabla s_k = \nabla s_k(A)$.  Then $\nabla s_n$ is a multiple of $J$, so $H\nabla s_n H = O$.  Moreover, the collection of matrices $\{H\nabla s_1 H, H\nabla s_2 H, \ldots, H\nabla s_{n-1} H\}$ spans a subspace in $\msymz$, and its dimension equals $n-1$ if and only if the eigenvalues of $A$ are distinct.
\end{proposition}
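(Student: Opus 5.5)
Our plan is to rely on \cref{prop:gradientadj} and \cref{lem:gradientind}, using the fact that $A$ is symmetric. Since $A$ is symmetric, so is $A - xI$, and its cofactor matrix equals its adjugate. The constant term of $(A - xI)\cof$ in \cref{prop:gradientadj} is $\nabla s_n = A\adj$.

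\textbf{Step 1: $\nabla s_n$ is a multiple of $J$.} Because $G$ is connected, $\nul(A) = 1$ and $\ker A = \vspan\{\bone\}$, so $\operatorname{rank}(A) = n-1$. From $AA\adj = A\adj A = \det(A)I = O$, the columns and the rows of $A\adj$ lie in $\ker A = \vspan\{\bone\}$. Hence $A\adj = cJ$ for some scalar $c$, and in fact $c \neq 0$ since the rank is $n-1$. As $HJ = O$, we get $H\nabla s_n H = O$.

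\textbf{Step 2: the span lies in $\msymz$.} Each $H\nabla s_k H$ is symmetric and kills $\bone$, so it belongs to $\msymz$.

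\textbf{Step 3: identify the span.} By \cref{lem:gradientind} with $M = A$ (so $M\trans = A$),
\[
\vspan\{\nabla s_1,\ldots,\nabla s_n\} = \vspan\{I, A, \ldots, A^{n-1}\}.
\]
Conjugating by $H$ and using $HA = AH = A$, we obtain
\[
\vspan\{H\nabla s_1H,\ldots,H\nabla s_nH\} = \vspan\{H, A, A^2, \ldots, A^{n-1}\}.
\]
Since $H\nabla s_nH = O$, the left side is exactly the span of the first $n-1$ projected gradients. So the question reduces to computing $\dim\vspan\{H, A, \ldots, A^{n-1}\}$.

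\textbf{Step 4: compute the dimension.} Take an orthonormal eigenbasis with first vector $\bone/\sqrt{n}$. In this basis, $H$ corresponds to $\operatorname{diag}(0,1,\ldots,1)$ and $A^k$ to $\operatorname{diag}(0,\lambda_2^k,\ldots,\lambda_n^k)$, where $\lambda_2,\ldots,\lambda_n > 0$ because $G$ is connected. The span is therefore the set of $p(A|_{\bone^\perp})$, extended by zero, over polynomials $p$ of degree at most $n-1$. This is the algebra generated by $A|_{\bone^\perp}$, whose dimension is the number of distinct values among $\lambda_2,\ldots,\lambda_n$. Because these values are nonzero, using $H$ or $A^0$ makes no difference, and $A^{n-1}$ is still allowed since that many distinct values need degree at most $n-2$. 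Hence the dimension is $n-1$ exactly when $\lambda_2,\ldots,\lambda_n$ are distinct. Since $0$ is simple, this is equivalent to all eigenvalues of $A$ being distinct.

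\textbf{Expected obstacle.} The main point of care is Step 3: justifying that the $H$-conjugated span equals the span of $H$ and the powers of $A$, and that dropping $\nabla s_n$ loses nothing. Both follow from the linearity of $X \mapsto HXH$ together with Step 1.
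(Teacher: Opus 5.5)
Your proposal is correct and follows essentially the same route as the paper. In both, $\nabla s_n = A\cof$ is a multiple of $J$ because its rows and columns lie in $\ker A = \vspan\{\bone\}$. Then \cref{lem:gradientind} and the linear map $X\mapsto HXH$ reduce the dimension count to the spectrum of $A$ on $\bone^\perp$. The only difference is cosmetic: the paper counts via spectral idempotents ($HP_1H = O$ and $HP_kH = P_k$ for $k\neq 1$, giving $\vspan\{P_2,\ldots,P_q\}$), while you count via $\vspan\{H, A, \ldots, A^{n-1}\}$ in an eigenbasis, which is the same interpolation fact.
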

\begin{proof}
Since $G$ is connected, $\nul(A) = 1$ and the kernel of $A$ is $\vspan\{\bone\}$.  By applying \cref{prop:gradientadj} with $x = 0$, we have $\nabla s_n = A\cof$.  Moreover, $A\cof$ is a multiple of $J$ since $AA\cof = AA\adj = \det(A) I = O$ and $A\cof$ is symmetric.  Necessarily, $H\nabla s_n H = O$. Since $HMH$ is the orthogonal projection of a symmetric matrix $M$ onto $\msymz$, the set $\{H\nabla s_1 H, H\nabla s_2 H, \ldots, H\nabla s_{n-1} H\}$ spans a subspace in $\msymz$.

Let $A = \sum_{i=1}^q\lambda_q P_i$ be the spectral decomposition of $A$, where $\lambda_1, \ldots, \lambda_q$ are the distinct eigenvalues of $A$ while $P_i$ is the projection matrix onto the corresponding eigenspace.  Without loss of generality, we may assume $\lambda_1 = 0$ and $P_1 = \frac{1}{n}J$.  Note that $P_1P_k = P_kP_1 = O$ for any $k \neq 1$ implies that the rows and columns are orthogonal to $\bone$ for any $k \neq 1$.  Therefore, $HP_kH = P_k$ for $k \neq 1$.  

Also, it is known that $\{P_1, P_2, \ldots, P_q\}$, $\{I, A, \ldots, A^{n-1}\}$, $\{\nabla s_1, \nabla s_2, \ldots, \nabla s_n\}$ span the same subspace in $\msym$ by \cref{lem:gradientind}.  By applying $M \mapsto HMH$ to every element, along with the facts that $HP_1H = O$, $HP_kH = P_k$ for $k\neq 1$, and $H\nabla s_n H = O$, we have
\[
    \vspan\{P_2, P_3, \ldots, P_q\} = \vspan\{H\nabla s_1 H, H\nabla s_2 H, \ldots, H\nabla s_{n-1} H\}.
\]
Moreover, its dimension is $n-1$ if and only if $q = n$, that is, the eigenvalues of $A$ are distinct.
\end{proof}

\begin{theorem}
\label{thm.SSPWL-Jac}
Let $G$ be a connected graph,  $A\in\mptn_L(G)$ and $J_A$ the associated Jacobian matrix.  Then   $J_A$ has full rank if and only if the eigenvalues of $A$ are distinct and $A$ has the SSPWL.
\end{theorem}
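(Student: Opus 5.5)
We prove both directions by reading $J_A$ as an inner-product matrix between a basis of $\range(F_B)$ and a spanning set of $\range(F_K)^\perp$, and then applying \cref{prop:tnequiv} and \cref{prop:rangestrong}. Here $J_A$ is an $(n-1)\times|E(G)|$ matrix, and "full rank" means rank $n-1$, i.e., full row rank. Work in the ambient space $W=\msymz$ with $T_1=\range(F_B)=\vspan\mptn_L(G)$ and $T_2=\range(F_K)$, so that $N_2=T_2^\perp=\{Y\in\msymz: AY=YA\}$ by \cref{prop:range}.

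First, rewrite the entries of $J_A$. Each $E_{e_j}$ lies in $\msymz$, so $HE_{e_j}H=E_{e_j}$. Hence
\[
\inp{\nabla s_i}{E_{e_j}}=\inp{\nabla s_i}{HE_{e_j}H}=\inp{H\nabla s_iH}{E_{e_j}},
\]
using that $H$ is symmetric and the trace is cyclic. Thus $J_A=\begin{bmatrix}\inp{H\nabla s_iH}{E_{e_j}}\end{bmatrix}$. The matrices $\{E_{e_j}: e_j\in E(G)\}$ form a basis of $T_1$, since they are linearly independent and there are $|E(G)|=\dim T_1$ of them by \cref{cor:dim}.

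Second, identify the span of the $H\nabla s_iH$. By \cref{prop:gradientprojection} (and its proof),
\[
\vspan\{H\nabla s_1H,\ldots,H\nabla s_{n-1}H\}=\vspan\{P_2,\ldots,P_q\}.
\]
Each $P_k$ with $k\ge 2$ lies in $\msymz$ and commutes with $A$, so this span is contained in $N_2$. Now split on whether the eigenvalues are distinct.

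If the eigenvalues of $A$ are not distinct, then $\dim\vspan\{H\nabla s_iH\}<n-1$ by \cref{prop:gradientprojection}. The rows of $J_A$ are the linear functionals $E\mapsto\inp{H\nabla s_iH}{E}$ restricted to $T_1$. A linear dependence among the $H\nabla s_iH$ gives the same dependence among the rows, so $\operatorname{rank}J_A<n-1$ and $J_A$ is not of full rank. This proves one half of the forward direction.

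If the eigenvalues are distinct, then all $m_i=1$ and $m_1=1$, so \cref{cor:dim} gives $\dim N_2=\binom{1}{2}+(n-1)\binom{2}{2}=n-1$. On the other side, $\{H\nabla s_iH\}_{i=1}^{n-1}$ spans a space of dimension $n-1$ contained in $N_2$. Therefore $\beta=\{H\nabla s_iH\}_{i=1}^{n-1}$ is a basis of $N_2$.

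Apply \cref{prop:tnequiv} with the roles of $T_1$ and $T_2$ swapped, so the relevant statement is $T_2+T_1=W$. Take the basis $\alpha=\{E_{e_j}\}$ of $T_1$ and the basis $\beta$ of $N_2$. The inner-product matrix $\begin{bmatrix}\inp{E_{e_j}}{H\nabla s_iH}\end{bmatrix}$ is exactly $J_A\trans$. Full column rank of $J_A\trans$ is full row rank of $J_A$. By \cref{prop:tnequiv} this is equivalent to $T_1+T_2=\msymz$, which by \cref{prop:rangestrong} is equivalent to $A$ having the SSPWL. Combined with the non-distinct case, this gives both directions.

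The main obstacle is carefully matching \cref{prop:tnequiv} to this setting, where one of the two spanning sets is not a priori a basis. The role of the distinct-eigenvalue hypothesis is precisely to make the projected gradients a basis of $N_2$. The role of the non-distinct case is that the rank deficiency of the $H\nabla s_iH$ forces a rank deficiency in $J_A$.
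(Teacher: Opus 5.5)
Your proof is correct and follows the paper's argument. You split on whether the spectrum is simple. In the repeated case you use $\inp{\nabla s_i}{E_{e_j}}=\inp{H\nabla s_iH}{E_{e_j}}$ together with \cref{prop:gradientprojection}. In the simple case you show by a dimension count that $\{H\nabla s_iH\}$ is a basis of $\range(F_K)^\perp$, then invoke \cref{prop:tnequiv} and \cref{prop:rangestrong}.

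The one real difference is how you get $H\nabla s_iH\in\range(F_K)^\perp$. You do it algebraically: the span equals $\vspan\{P_2,\ldots,P_q\}$, and these projections lie in $\msymz$ and commute with $A$, so they belong to the commutant description in \cref{prop:range}. The paper instead uses the isospectral-trajectory argument. Your ``roles swapped'' remark is unnecessary, since \cref{prop:tnequiv} is meant with $\alpha$ a basis of $T_1$ and $\beta$ a basis of $T_2^\perp$, which is exactly how you apply it.
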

\begin{proof}
Let $\nabla s_k = \nabla s_k(A)$.  We consider two cases.  Firstly, when the eigenvalues of $A$ contain repeated elements, $\{H\nabla s_1H, H\nabla s_2H, \ldots, H\nabla s_{n-1}H\}$ is a linearly dependent set by \cref{prop:gradientprojection}.  Observe that $\inp{\nabla s_i}{E_{e_j}} = \inp{H\nabla s_iH}{E_{e_j}}$ since $E_{e_j}\in\msymz$ and $HE_{e_j} = E_{e_j}H = E_{e_j}$.  Therefore, the rows of $J_A$ are dependent and $J_A$ does not have full row rank.  

Secondly, when the eigenvalues of $A$ are distinct, by \cref{prop:gradientprojection} the set $\{H\nabla s_1H, H\nabla s_2, \ldots, H\nabla s_{n-1}H\}\subset \msymz$ is independent.  Let $F$ be the function as in \cref{eq:perturb}.  Consider a trajectory $A(t) = e^{-K(t)}Ae^{K(t)}$ with $K(0) = O$ and $A(0) = A$.  Since $s_k$ is a constant for any $A(t)$, we have $\inp{\nabla s_k}{\dot{A}(0)} = 0$.  By properly choosing $K(t)$, we may obtain $\dot{A}(0)$ to be any element in $\range(F_K))$, so $\nabla s_k$ is orthogonal to any element in $\range(F_K)$.  Moreover, 
\[
    \inp{H\nabla s_kH}{K\trans A - AK} = \inp{\nabla s_k}{H(K\trans A - AK)H} = \inp{\nabla s_k}{K\trans A - AK} = 0
\]
for any $K\trans A - AK\in\range(F_K)^\perp$ since $K\trans A - AK\in\msymz$.  Thus, $H\nabla s_kH\in\range(F_K)^\perp$ for each $k$.  By \cref{cor:dim}, $\dim(\range(F_K)^\perp) = n - 1$.  Therefore, $\{H\nabla s_1H, H\nabla s_2H, \ldots, H\nabla s_{n-1}H\}$ is a basis of $\range(F_K)^\perp$.  On the other hand, $\{E_{e_j}\}$ for nonzero entries $e_j$ is a basis of $\range(F_B)$.  Recall that $A$ has the SSPWL if and only if $\range(F_B) + \range(F_K) = \msymz$ by \cref{prop:rangestrong}.  The result now follows from \cref{prop:tnequiv}.
\end{proof}

\subsection{Every Connected Graph Realizes a Matrix with SSPWL}

In this subsection, we show that connectedness  is necessary  and sufficient for a graph to admit a matrix with the SSPWL. To prove this, we rely on the equivalence of a matrix having the SSPWL and  having a full rank Jacobian matrix (\cref{thm.SSPWL-Jac}).

\begin{lemma}
\label{lem:Jac_submatrix}
Let $T$ be a  tree on $n$ vertices with variable edge weights $w_1, \dots, w_{n-1}$ such that $w_{n-1}$ is a weight on a pendant edge. Let $T'$ be the tree on $n-1$ vertices  obtained from $T$ by removing the pendant edge and leaf corresponding to the weight $w_{n-1}$. For $A \in \mptn_L(T)$, let
$p_A(x) = \det(A- xI)$ be the characteristic polynomial of $A$ where the coefficient of $(-x)^{n-i}$ is  $s_i(w_1, \dots, w_{n-1})$, the $i$th symmetric function of the eigenvalues of $A$. Let $J_A$ be the Jacobian matrix of $A$ with $i,j$-entry given by $\frac{\partial s_i}{\partial w_j}$ evaluated at the weights $(w_1, \dots, w_{n-1})$ of $A$. Then $J_A$ evaluated at $(w_1, \dots, w_{n-2}, 0)$
is of the form 
\[
\left[\begin{array}{c|c}
J_{A'} & \ast \\ \hline
\mathbf{0} & c
\end{array}
\right],
\]
where $J_A'$ is the Jacobian matrix with respect to the matrix $A' \in \mptn_L(T')$ with edge weights $w_1, \dots, w_{n-2}$ and $c =  nw_1 w_2  \cdots w_{n-2}$.
\end{lemma}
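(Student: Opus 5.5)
The plan is to get an explicit formula for $p_A(x)$ that is affine in the pendant weight $w := w_{n-1}$, and then read off the blocks of $J_A$ at $w=0$. Label the leaf as vertex $n$ and its neighbour in $T'$ as vertex $v$. Write $A'\in\mptn_L(T')$ for the weighted Laplacian of $T'$ with weights $w_1,\dots,w_{n-2}$. Then the leading $(n-1)\times(n-1)$ principal submatrix of $A$ is $A' + w\,\be_v\be_v\trans$, and the last row of $A$ is $-w\be_v\trans$ followed by $w$.

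\emph{Step 1: expand the characteristic polynomial along the last row and column.} This gives
\[
p_A(x) = (w-x)\det(A'+w\be_v\be_v\trans - xI) - w^2 q(x),
\]
where $q(x)=\det\big((A'-xI)(v)\big)$ is the characteristic polynomial of $A'$ with row and column $v$ deleted. Linearity of the determinant in row $v$ gives $\det(A'+w\be_v\be_v\trans-xI)=p_{A'}(x)+w\,q(x)$. Substituting, the $w^2$ terms cancel and we get
\[
p_A(x) = -x\,p_{A'}(x) + w\big(p_{A'}(x) - x\,q(x)\big).
\]
The polynomials $p_{A'}$ and $q$ depend only on $w_1,\dots,w_{n-2}$. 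Comparing coefficients of $(-x)^{n-i}$ yields
\[
s_i(A)=s_i(A')+w\,t_i,
\]
where $t_i$ is the coefficient of $(-x)^{n-i}$ in $p_{A'}(x)-x q(x)$ and depends only on $w_1,\dots,w_{n-2}$.

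\emph{Step 2: the upper-left block and the zero row.} For $j\le n-2$, setting $w=0$ in Step 1 gives $\partial s_i(A)/\partial w_j = \partial s_i(A')/\partial w_j$. For $i\le n-2$ these entries form exactly $J_{A'}$, whose rows are $s_1,\dots,s_{n-2}$ because $A'$ has order $n-1$. For $i=n-1$ we have $s_{n-1}(A')=\det A'\equiv 0$, since $A'$ is a Laplacian. Hence the last row of $J_A$ vanishes in the first $n-2$ columns. The last column, indexed by $w$, contributes the unspecified entries $\ast$ together with the corner entry $c$.

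\emph{Step 3: compute $c$.} This is the only nontrivial step. By Step 1, $c=\partial s_{n-1}/\partial w = t_{n-1}$, which is the coefficient of $(-x)^1$ in $p_{A'}(x)-xq(x)$. The first term contributes $s_{n-2}(A')$. The second term contributes the constant term of $q$, which is $\det A'(v)$.

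Both quantities are evaluated by the weighted matrix-tree theorem. Every $(n-2)\times(n-2)$ principal minor of the Laplacian $A'$ of the tree $T'$ equals the weighted spanning-tree count of $T'$. Since $T'$ is itself a tree, that count is $w_1\cdots w_{n-2}$. There are $n-1$ such principal minors, so $s_{n-2}(A')=(n-1)w_1\cdots w_{n-2}$, and also $\det A'(v)=w_1\cdots w_{n-2}$. Adding these gives $c=n\,w_1\cdots w_{n-2}$.

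The main care point is tracking the signs and powers of $(-x)$ correctly in Step 1, so that both the indices of the $s_i$ and the value of $c$ come out as stated.
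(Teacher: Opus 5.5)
Your proof is correct. For the upper-left block you do essentially what the paper does: at $w_{n-1}=0$ the matrix becomes $A'\oplus[0]$, so $p_A(x)=-x\,p_{A'}(x)$, and setting $w_{n-1}=0$ commutes with $\partial/\partial w_j$ for $j\le n-2$.

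The routes differ for the last row. The paper applies the weighted matrix-tree theorem once, to $T$ itself. Each of the $n$ principal minors of order $n-1$ of $A$ equals $w_1\cdots w_{n-1}$, so $s_{n-1}(A)=n\,w_1\cdots w_{n-1}$ identically. Differentiating at $w_{n-1}=0$ then gives both the zero entries and $c=n\,w_1\cdots w_{n-2}$ at once. That route needs neither the bordered expansion nor the observation $\det A'\equiv 0$.

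Your cofactor expansion at the pendant vertex is longer but yields more:
\begin{enumerate}
\item It gives the exact affine dependence $s_i(A)=s_i(A')+w\,t_i$ for every $i$. This identifies the $\ast$ column explicitly; one checks $t_i=s_{i-1}(A')+s_{i-1}(A'(v))$.
\item It gives the recursion $s_{n-1}(A)=w\,(s_{n-2}(A')+\det A'(v))$. This recovers the paper's closed formula while using the matrix-tree theorem only on the smaller tree $T'$.
\item It shows that $\det J_A$ is a polynomial in $w_{n-1}$ of degree at most $n-2$, since the first $n-2$ columns are affine in $w$ and the last column is independent of $w$. This sharpens the degree bound invoked in the paper's corollary on appending a pendant edge to a tree.
\end{enumerate}
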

\begin{proof}
For $i, j < n-1$, the $i,j$-entry of $J_A$ is $\frac{\partial s_i(w_1, \dots, w_{n-2}, w_{n-1})}{\partial w_j}$. Note that the value of this expression when $w_{n-1}$ is set equal to 0 is the same as $\frac{\partial s_i(w_1, \dots, w_{n-2}, 0)}{\partial w_j}$; that is, if $g_i(w_1, \dots, w_{n-2}) = s_i(w_1, \dots, w_{n-2}, 0)$, then  $$\frac{\partial g_i(w_1, \dots, w_{n-2})}{\partial w_j} = \left. \frac{\partial s_i(w_1, \dots, w_{n-2}, w_{n-1})}{\partial w_j} \right |_{w_{n-1}= 0}.$$
On the other hand, if $A' \in \mptn_L(T')$ as described in the statement of the Lemma, then $A'$ is the order $n-1$ submatrix of the matrix obtained from $A$ with $w_{n-1} = 0$. Thus, $p_{A'}(x) = \det(A' - xI)$ where the coefficient of $(-x)^{n-1-i}$ is 
$s_i(w_1, \dots, w_{n-2}) = g_i(w_1, \dots, w_{n-2})$. Thus, for $i, j < n-1$, the $i,j$-entry of $J_A$ when evaluated at $(w_1, \dots, w_{n-2}, 0)$ is equal to the $i,j$-entry of $J_A'$. It is left to prove that the last row of $J_A$ when evaluated at $(w_1, \dots, w_{n-2}, 0)$ is $[\ 0 \ \cdots \  0 \ | \ c \ ]$, where $c=nw_1 \cdots w_{n-2}$.  Since the entries in this row are equal to $$\left.\frac{\partial s_{n-1}(w_1, \dots, w_{n-1})}{\partial w_j}\right |_{w_{n-1}= 0}, \ j = 1, \dots, n-1,$$ we are done once we verify that 
\begin{equation}
\label{eq:sn-1}
s_{n-1}(w_1, \dots, w_{n-1}) = nw_1\cdots w_{n-1},
\end{equation}
which follows from the Weighted Matrix-Tree Theorem (see, e.g., \cite{KS}).
\end{proof}

\begin{lemma}
\label{lem:jac_tree}
Let $n \geq 2$. For any tree $T$ on $n$ vertices, there exist $w_1, \dots, w_{n-1} > 0 $ such that $A \in \mptn_L(T)$ with these edge weights satisfies $\det J_A \neq 0$.
\end{lemma}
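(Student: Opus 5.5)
I would argue by induction on $n$, using \cref{lem:Jac_submatrix} as the inductive engine. For the base case $n = 2$, the tree is a single edge with weight $w_1$. Then $s_1 = 2w_1$, and $J_A = [2]$ is nonsingular for any $w_1 > 0$.

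For the inductive step, let $T$ be a tree on $n \geq 3$ vertices. Choose a leaf and label its pendant edge so that it carries weight $w_{n-1}$. Let $T'$ be the tree on $n-1$ vertices obtained by deleting this leaf. By the induction hypothesis there are weights $w_1^*, \dots, w_{n-2}^* > 0$ such that $\det J_{A'} \neq 0$ for the corresponding $A' \in \mptn_L(T')$.

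Now evaluate $J_A$ at $(w_1^*, \dots, w_{n-2}^*, 0)$. Each $s_i$ is a polynomial in the weights, so the entries of $J_A$ are polynomials in $w_1, \dots, w_{n-1}$. By \cref{lem:Jac_submatrix}, at this point $J_A$ is block upper triangular. Its determinant is therefore
\[
\det J_{A'} \cdot n\, w_1^* \cdots w_{n-2}^*,
\]
which is nonzero since every $w_i^* > 0$ and $\det J_{A'} \neq 0$.

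The one subtlety is that the point $(w_1^*, \dots, w_{n-2}^*, 0)$ does not correspond to a matrix in $\mptn_L(T)$, because the last weight must be strictly positive. I would handle this by continuity. The function $\det J_A(w_1, \dots, w_{n-1})$ is a polynomial, hence continuous, and it is nonzero at $(w_1^*, \dots, w_{n-2}^*, 0)$. So it stays nonzero on a neighborhood of that point. Choosing $w_{n-1} = \varepsilon > 0$ small enough and keeping $w_i = w_i^*$ for $i \leq n-2$ gives all weights positive with $\det J_A \neq 0$. This completes the induction.

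I expect no serious obstacle. The main point needing care is that \cref{lem:Jac_submatrix} is applied to $J_A$ as a polynomial matrix evaluated at the boundary point $w_{n-1} = 0$, not to a genuine weighted Laplacian. The perturbation argument above is exactly what converts that boundary computation into an honest matrix in $\mptn_L(T)$. One should also fix the ordering of rows $s_1, \dots, s_{n-1}$ and columns $w_1, \dots, w_{n-1}$ consistently, so that the block structure from the lemma is literally the structure of $J_A$.
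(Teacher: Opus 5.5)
Your proposal is correct and follows the paper's proof essentially verbatim. Both argue by induction using \cref{lem:Jac_submatrix} to get $\det J_A = \det J_{A'}\cdot n\, w_1\cdots w_{n-2} \neq 0$ at $w_{n-1}=0$, then perturb $w_{n-1}$ to a small positive $\varepsilon$ by continuity. The only difference is that the paper also checks $n=3$ ($T=P_3$) directly, which your inductive step from the base case $n=2$ already covers.
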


\begin{proof}
We prove the result by induction on $n$.  For $n =2$, choose any $w_1 \neq 0$. For $n=3$, $T= P_3$ and any $A \in \mptn_L(T)$ satisfies
\[
A = \begin{bmatrix}
    w_1  & -w_1 & 0 \\
    -w_1 & w_1 + w_2 & -w_2\\
    0 & -w_2 & w_2
\end{bmatrix}, \ 
J_A = \begin{bmatrix}
2 & 2 \\
3w_2 & 3w_1 
\end{bmatrix}.
\]
Thus, $\det J_A \neq 0$ for any $w_1, w_2$ with $w_1 \neq w_2$.

For the purpose of an induction argument, assume that the result holds for all trees on fewer than $n$ vertices. Suppose $T$ is a  tree on $n$ vertices with variable edge weights $w_1, \dots, w_{n-1}$ such that $w_{n-1}$ is a weight on a pendant edge, and  $T'$ is the tree on $n-1$ vertices   with weights $w_1, \dots, w_{n-2}$, that is, $T'$ is obtained from $T$ by removing the pendant edge and leaf. Let $A \in \mptn_L(T)$ and denote by $A_0$ the matrix obtained from $A$ by setting $w_{n-1} = 0$. By \cref{lem:Jac_submatrix}, 
\[
J_{A_0}  =  \left[\begin{array}{c|c}
J_{A'} & \ast \\ \hline
\mathbf{0} & c
\end{array}
\right],
\]
where $J_{A'}$ is the Jacobian matrix with respect to the matrix $A' \in \mptn_L(T')$ with edge weights $w_1, \dots, w_{n-2}$ and $c =  nw_1 w_2  \cdots w_{n-2}$. Using the induction hypothesis, there exist positive weights $w_1, \dots, w_{n-2} > 0$ such that $\det J_{A'}\neq 0$. Hence, $\det J_{A_0} = \det J_{A'} \cdot c \neq 0$. With this choice of weights $w_1, \dots, w_{n-2}$, consider $\det J_A$ as a polynomial in $w_{n-1}$, and call this polynomial $q(w_{n-1})$. We have $q(0) \neq 0$. Since $q$ is a continuous function of $w_{n-1}$, there exists $\varepsilon > 0$ such that $q(\varepsilon) \neq 0$. Then the matrix $A_\varepsilon \in \mptn_L(T)$ with weights $w_1, \dots, w_{n-2}, w_{n-1} = \varepsilon$ has $\det J_{A_\varepsilon} \neq 0$.
\end{proof}

\begin{theorem}
    \label{thm:}
   The set  $\mptn_L(G)$ admits a  matrix with SSPWL if and only if $G$ is connected.
\end{theorem}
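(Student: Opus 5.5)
For the ``if'' direction, assume $G$ is connected and fix a spanning tree $T$ of $G$. By \cref{lem:jac_tree}, there are positive weights $w_1,\dots,w_{n-1}$ such that the corresponding $A_T\in\mptn_L(T)$ satisfies $\det J_{A_T}\neq 0$. For a tree, $J_{A_T}$ is square of order $n-1$, so it has full rank.

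Since $T$ is connected, \cref{thm.SSPWL-Jac} then shows that $A_T$ has distinct eigenvalues and has the SSPWL. Now $T$ is a spanning subgraph of $G$, so the Supergraph lemma (\cref{lem:supergraph}) applies. It produces $A'\in\mptn_L(G)$ with the same spectrum as $A_T$ that also has the SSPWL. The degenerate case $n=1$ is trivial: there are no missing edges, so the condition $I\circ X=O$ forces $X=O$.

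For the ``only if'' direction, suppose $G$ is disconnected and $A\in\mptn_L(G)$. I will exhibit a nonzero symmetric $X$ with $A\circ X=I\circ X=O$ and $H(AX-XA)H=O$. Let $V_1$ be the vertex set of one component and $V_2=V\setminus V_1$, both nonempty. Let $\bu$ be the indicator vector of $V_1$ and $\bb=\bone-\bu$.

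Take $X=\bu\bb\trans+\bb\bu\trans$. Its support consists of pairs $(i,j)$ with one index in $V_1$ and the other in $V_2$. No such pair is an edge of $G$, and none lies on the diagonal, so $A\circ X=I\circ X=O$. Every row of $A$ sums to zero and $A$ is block diagonal with respect to the partition $\{V_1,V_2\}$, so $A\bu=A\bb=\bzero$. This gives $AX=O$ and $XA=(AX)\trans=O$, hence $H(AX-XA)H=O$ while $X\neq O$. Therefore $A$ fails the SSPWL.

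The only genuinely nontrivial ingredient is \cref{lem:jac_tree}, which is given. The remaining point to check is that the Supergraph lemma is applied with the correct containment, namely that $T$ is a spanning subgraph of $G$.
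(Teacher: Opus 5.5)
Your proposal is correct and follows the paper's proof essentially verbatim. Your witness $X=\bu\bb\trans+\bb\bu\trans$ is exactly the paper's block matrix $\begin{bmatrix} O & J\trans \\ J & O\end{bmatrix}$, and your check that $AX=XA=O$ spells out why $H(AX-XA)H=O$; the sufficiency direction uses the same chain of spanning tree, \cref{lem:jac_tree}, \cref{thm.SSPWL-Jac}, and the Supergraph lemma, with your separate handling of $n=1$ being a small addition the paper omits, since \cref{lem:jac_tree} assumes $n\ge 2$.
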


\begin{proof}
Suppose $G$ is not connected.  Assume $A\in\mptn_L(G)$ has the form $A = \begin{bmatrix}
    B & O \\O & C
\end{bmatrix}$.  By selecting $X = \begin{bmatrix} O & J\trans \\ J & O \end{bmatrix}$, where $J$ is the all-ones matrix of appropriate size, it follows that $A\circ X = I\circ X = O$ and $H(AX - XA)H = O$.  Therefore, every matrix in $\mptn_L(G)$ loses the SSPWL if $G$ is disconnected.

For sufficiency, assume that $G$ is connected. Then $G$ contains a spanning tree $T$. By \cref{lem:jac_tree}, there exists $A \in \mptn_L(T)$ such that $\det J_A \neq 0$. Hence, by \cref{thm.SSPWL-Jac}, $A$ has the SSPWL. The result now follows from the Supergraph lemma \cref{lem:supergraph}. 
\end{proof}

The next result demonstrates that starting with an SSPWL matrix whose graph is a tree, appending an edge results in an SSPWL matrix  for all but a finite number of weights that are assigned to the new edge. 

\begin{corollary}   
Let $n \geq 2$ and let $T$ be a tree on $n$ vertices. For $A \in \mptn_L(T)$, $w > 0$ and $j \in \{1, \dots, n\}$, define
    \[
    \widehat{A} =  \left[\begin{array}{c|c}
A + w\mathbf{e}_j\mathbf{e}_j\trans & -w\mathbf{e}_j\ \\ \hline
-w\mathbf{e}_j\trans & w
\end{array}
\right] \in \mptn_L(\widehat{T}),
    \]
    where $V(\widehat{T}) = V(T) \cup \{n+1\}$ and $E(\widehat{T}) = E(T) \cup \{j, n+1\}$. If $A$ has the SSPWL, then  $\widehat{A}$ has the SSPWL  for all but a finite number of values of $w >0$.
    \end{corollary}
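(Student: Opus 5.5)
The plan is to reduce everything to the Jacobian criterion and then use a polynomial argument in $w$, following the proof of \cref{lem:jac_tree}. Since $T$ is a tree, \cref{cor:tree-SSPWL} tells us that $A$ has distinct eigenvalues. \cref{thm.SSPWL-Jac} then says that $A$ having the SSPWL is equivalent to $\det J_A \neq 0$, where $J_A$ is the square $(n-1)\times(n-1)$ Jacobian. The tree $\widehat{T}$ is connected, so the same theorem applies to $\widehat{A}$: it has the SSPWL exactly when its $n\times n$ Jacobian $J_{\widehat{A}}$ is nonsingular. So the goal is to show $\det J_{\widehat{A}} \neq 0$ for all but finitely many $w>0$.

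Write the edge weights of $\widehat{A}$ as $w_1,\dots,w_{n-1}$ (the weights of $A$) together with $w_n = w$ on the pendant edge $\{j,n+1\}$. Each $s_i(\widehat{A})$ is a polynomial in these weights. Hence every entry of $J_{\widehat{A}}$, and so $q(w) := \det J_{\widehat{A}}$, is a polynomial in $w$ once $w_1,\dots,w_{n-1}$ are fixed to the weights of $A$.

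Next apply \cref{lem:Jac_submatrix} with $T$ playing the role of $T'$ and $\widehat{T}$ the role of $T$. At $w=0$ it gives
\[
J_{\widehat{A}}\big|_{w=0} = \left[\begin{array}{c|c} J_A & \ast \\ \hline \mathbf{0} & c\end{array}\right], \qquad c = (n+1)w_1\cdots w_{n-1}.
\]
Therefore $q(0) = c\det J_A$. This is nonzero, because the weights are positive and $\det J_A\neq 0$. So $q$ is a nonzero polynomial and has only finitely many roots. For every $w>0$ that is not a root, $\det J_{\widehat{A}}\neq 0$, and by \cref{thm.SSPWL-Jac} the matrix $\widehat{A}$ has the SSPWL, which is the claim.

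The main point to check is that \cref{lem:Jac_submatrix} really applies to the specific given $A$, not just to generic weights. Its proof is a pure polynomial identity: the partial derivatives $\partial s_i/\partial w_k$ for $k<n$, evaluated at $w_n=0$, reduce to those of the block-diagonal matrix $A\oplus[0]$, whose $s_i$ equal $s_i(A)$. The last row comes from the Matrix-Tree identity $s_n = (n+1)\prod w_k$. Both facts hold for arbitrary weights, so they hold at the weights of $A$. It also needs saying that evaluating at $w=0$ is legitimate even though $0$ is not an allowed weight, since $q$ is a polynomial defined for all $w$.
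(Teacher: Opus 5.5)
Your proposal is correct and follows the paper's proof essentially step for step: you reduce to $\det J_{\widehat{A}}\neq 0$ via \cref{cor:tree-SSPWL} and \cref{thm.SSPWL-Jac}, use \cref{lem:Jac_submatrix} to get $q(0)=c\det J_A\neq 0$, and conclude because the polynomial $q$ has only finitely many roots. Your constant $c=(n+1)w_1\cdots w_{n-1}$ is the right one for the $(n+1)$-vertex tree $\widehat{T}$ (the paper writes $n w_1\cdots w_{n-1}$), though only its nonvanishing matters.
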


\begin{proof}
Let $w_1, \dots, w_{n-1}$ be the weights on the edges of $T$ and consider $\det J_{\widehat{A}}$ as a polynomial in $w$ (treating $w_1, \dots, w_{n-1}$ as constants), call this polynomial $q(w)$. By \cref{lem:Jac_submatrix}, $q(0) = \det J_A \cdot nw_1\dots w_{n-1}$. Since $A \in \mptn_L(T)$ has the SSPWL, the eigenvalues of $A$ are distinct by \cref{cor:tree-SSPWL} and thus  $\det J_A \neq 0$ by \cref{thm.SSPWL-Jac}. Thus $q(0) \neq 0$,  and since $q$ is a  polynomial of degree at most $n$, $q(w) \neq 0$, except for at most $n$ values of $w$ (see also the proof of Lemma \ref{lem:jac_tree}). Thus, $\det J_{\widehat{A}} \neq 0$ except for at most $n$ values of $w$.
\end{proof}

\section{Stars, \texorpdfstring{$K_n - e$}{Kn - e}, and Graphs on Four Vertices}
\label{sec:iepl4}

In this section, we apply the analysis on strong weighted Laplacian matrices and the corresponding Jacobian Method from the previous sections to two specific graph families and carefully review (relative to some of the work in \cite{IEPL})
the spectral regions in $S_{L}$  for connected graphs on 4 vertices.  For short, we call a matrix that has the SSPWL as a \emph{strong} matrix; otherwise, it is a \emph{weak} matrix.

\subsection{Stars}
 In this subsection, we focus on strong weighted Laplacian matrices whose graph is a star, namely $K_{1,n-1}$. 
Let
\[ A= \left[\begin{array}{c|ccc}
\sum_{i} w_i & -w_1 & \cdots & -w_{n-1} \\
\hline
-w_1 & w_1 & ~ & 0 \\
\vdots & ~ & \ddots & ~ \\
-w_{n-1} & 0 & ~ & w_{n-1}\\
\end{array} \right]
\]
be a weighted Laplacian matrix that fits the star. Then a basic computation reveals that the sum of all $k\times k$ minors of $A$ can be written as
\[ s_{k} = (k+1) \sum _{|\alpha|=k} \prod_{i \in \alpha} w_i, \]
where $\alpha$ is a $k$-subset of $\{1,2,\ldots, n-1\}$ for $k = 1,2,\ldots, n-1$.

Given a vector $\bw =(w_1, w_2, \ldots, w_{n-1})\trans$, we let 
$t_k(\bw)$ denote the $k$th elementary symmetric function of the entries of $\bw$, for $k=1,2,\ldots,n-1$. Then it follows that $s_k = (k+1)t_k(\bw)$. Furthermore, if $i_1, i_2, \ldots, i_j$ is a $j$-subset of $\{1,2,\ldots, n-1\}$, then the notation $\bw^{(i_1, i_2, \ldots, i_j)}$ denotes the sub-vector obtained from $\bw$ by removing the entries $w_{i_1}, w_{i_2}, \ldots, w_{i_j}$. 

Then, basic differentiation implies that 
\[ 
    \frac{\partial s_k}{\partial w_i} = (k+1) t_{k-1}(\bw^{(i)}),
\] 
where $k=1,2,\ldots,n-1$ and $i=1,2,\ldots,n-1$ (note: $t_0 :=1$). In this case, it follows that the Jacobian matrix with respect to the matrix $A \in \mptn_L(K_{1,n-1})$ with edge weights $w_1, \dots, w_{n-1}$ is the $(n-1)\times (n-1)$ matrix equal to
\[ J_{A}= \left[\begin{array}{cccc}
2 & 2 & \cdots & 2 \\
3 t_{1}(\bw^{(1)}) & 3 t_{1}(\bw^{(2)}) & \ldots & 3 t_{1}(\bw^{(n-1)}) \\
4 t_{2}(\bw^{(1)}) & 4 t_{3}(\bw^{(2)}) & \ldots & 4 t_{2}(\bw^{(n-1)}) \\
\vdots & \vdots & \cdots & \vdots \\
n t_{n-2}(\bw^{(1)}) & n t_{n-2}(\bw^{(2)}) & \ldots & n t_{n-2}(\bw^{(n-1)}) \\
\end{array} \right]. \]
To determine a criterion regarding the invertibility of $J_A$, we consider $\det (J_A)$. In this case applying elementary column operations to eliminate the 2's in row 1, it follows that $\det(J_A)$ is equal to 
\small{\[\hspace{-1cm} 2 \det \left[\begin{array}{cccc}
3(t_{1}(\bw^{(2)})-t_{1}(\bw^{(1)})) & 3(t_{1}(\bw^{(3)})-t_{1}(\bw^{(1)})) & \ldots & 3(t_{1}(\bw^{(n-1)})-t_{1}(\bw^{(1)})) \\
4(t_{2}(\bw^{(2)})-t_{2}(\bw^{(1)})) & 4(t_{2}(\bw^{(3)})-t_{2}(\bw^{(1)})) & \ldots & 4(t_{2}(\bw^{(n-1)})-t_{2}(\bw^{(1)})) \\
\vdots & \vdots & \cdots & \vdots \\
n(t_{n-2}(\bw^{(2)})-t_{n-2}(\bw^{(1)})) & n(t_{n-2}(\bw^{(3)})-t_{n-2}(\bw^{(1)})) & \ldots & n(t_{n-2}(\bw^{(n-1)})-t_{n-2}(\bw^{(1)})) \\
\end{array} \right], \]} where the latter matrix has size $(n-2)\times (n-2)$. In fact, the entries of this latter matrix are of the form 
$(k+2)(t_{k}(\bw^{(j)})-t_{k}(\bw^{(1)}))$, where $k=1,\ldots, n-2$ and $j=2,3,\ldots, n$. A straightforward algebraic simplification involving symmetric functions reveals
\[ t_{k}(\bw^{(j)})-t_{k}(\bw^{(1)}) = (w_1-w_j)t_{k-1}(\bw^{(1,j)}),\] where 
$k=1,\ldots, n-2$ and $j=2,3,\ldots, n$ and with the convention that $t_0 :=1$.
Applying the multi-linearity of the determinant function implies
\[ \det(J_A) = 2 \cdot 
\left(\prod_{j=2}^{n-1} (w_1-w_j)\right) \det(B),\] where the entries of $B$ are of the form $(k+2)t_{k-1}(w^{(1,j)})$. Hence, we can apply similar arguments as above to establish that 
\[ \det(J_A) = (n-1)! \cdot \prod_{i<j} (w_j-w_i).\]
With \cref{cor:tree-SSPWL,thm.SSPWL-Jac}, this formula implies \cref{thm:weakstar}.

\begin{theorem}
\label{thm:weakstar}
Choose the matrix $A \in \mptn_L(K_{1,n-1})$ with edge weights $w_1, \dots, w_{n-1}$. Then
$A$ has the SSPWL if and only if the edge weights $w_1, w_2, \ldots, w_{n-1}$ represent distinct positive numbers.
\end{theorem}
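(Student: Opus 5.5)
The plan is to combine the Vandermonde-type determinant formula for the star's Jacobian, derived just above, with \cref{thm.SSPWL-Jac} and \cref{cor:tree-SSPWL}. Positivity of the weights is automatic, since $A\in\mptn_L(K_{1,n-1})$ forces $w_i>0$; so the real content is that the SSPWL holds exactly when the $w_i$ are pairwise distinct. The star is a connected tree, so both of those results apply.

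First, I would pin down the determinant
\[
\det(J_A) = (n-1)!\prod_{i<j}(w_j-w_i).
\]
The preceding computation reduces $J_A$ to $2\prod_{j\ge 2}(w_1-w_j)\det(B)$, where $B$ has the same shape with entries $(k+2)t_{k-1}(\bw^{(1,j)})$. I would iterate this by induction: subtract the first column from the others, apply the identity $t_k(\bw^{(S\cup\{j\})})-t_k(\bw^{(S\cup\{1'\})})=(w_{1'}-w_j)t_{k-1}(\bw^{(S\cup\{1',j\})})$, and pull out the linear factors. Each stage produces a top row that is constant, equal to the next integer $3,4,\dots$, and these constants multiply to $n!/1=\ldots$. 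Up to sign and a nonzero constant, the result is the Vandermonde product, which is all that is needed. Alternatively, one can note that $J_A$ is the product of $\operatorname{diag}(2,3,\dots,n)$ with the Jacobian of the elementary symmetric functions $(t_1,\dots,t_{n-1})$ in $\bw$. That Jacobian has the classical determinant $\pm\prod_{i<j}(w_i-w_j)$, which gives the formula directly and is the cleaner route.

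Next come the two directions. If the $w_i$ are distinct, then $\det J_A\neq 0$, so by \cref{thm.SSPWL-Jac} the eigenvalues of $A$ are distinct and $A$ has the SSPWL. Conversely, suppose $A$ has the SSPWL. Since the star is a tree, \cref{cor:tree-SSPWL} gives that the eigenvalues of $A$ are distinct. Then \cref{thm.SSPWL-Jac} implies that $J_A$ has full rank, so the product of differences is nonzero and the $w_i$ are distinct.

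The only step requiring care is the determinant evaluation. I expect the main obstacle to be bookkeeping the iterated column reduction, and I would sidestep it with the factorization through the Jacobian of the elementary symmetric polynomials.
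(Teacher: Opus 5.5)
Your proposal is correct and follows the paper's argument. The paper also shows that $\det J_A$ is a nonzero constant times $\prod_{i<j}(w_j-w_i)$, and then combines \cref{cor:tree-SSPWL} with \cref{thm.SSPWL-Jac} for the two directions. Your factorization $J_A=\operatorname{diag}(2,\dots,n)\cdot\partial(t_1,\dots,t_{n-1})/\partial(w_1,\dots,w_{n-1})$ is a tidier packaging of the paper's iterated column reduction. As a side note, that factorization gives the constant $2\cdot 3\cdots n=n!$ rather than the $(n-1)!$ you quoted (for example, $\det J_A=6(w_1-w_2)$ when $n=3$). This is harmless, since only nonvanishing matters.
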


\subsection{The Complete Graph Minus an Edge}
\label{ssec:knme}

The case of a complete graph is simple in that all weighted Laplacian matrices are strong. However, removing a single edge does allow for some weak matrices which are characterized below. To this end, 
let $e = \{1, n\}.$ Suppose that $A = [a_{ij}] \in \mptn_L(K_n - e)$ does not have the SSPWL. Then there exists $c \neq 0$ such that $X = cE_{1n}$ satisfies $H(AX - XA)H = O$,  or equivalently, $H(AE_{1n} - E_{1n}A)H = O$.  Note that $a_{1n} = 0$ since $A\in\mptn_L(K_n - e)$.  By direct computation, $AE_{1n} - E_{1n}A$ is
\[
\begin{array}{cl}
 &  \left[\begin{array}{c|c|c}
0 & 0 \  \cdots  \ 0 & a_{11}\\
\hline
a_{2n} & & a_{21} \\
\vdots & O & \vdots \\
a_{n-1, n} & & a_{n-1,1}\\
\hline
a_{nn} & 0 \ \cdots \ 0 & 0
\end{array}
\right]
-
\left[\begin{array}{c|c|c}
0 & a_{n2} \  \cdots  \ a_{n,n-1} & a_{nn}\\
\hline
0 &  & 0\\
\vdots & O & \vdots \\
0 &  & 0\\
\hline
a_{11} & a_{12} \ \cdots \ a_{1,n-1}& 0
\end{array}
\right] \\
 & \\
 = & 
\left[\begin{array}{c|c|c}
0 & -a_{2n} \  \cdots  \ -a_{n-1,n} & a_{11} - a_{nn}\\
\hline
a_{2n} & & a_{12} \\
\vdots & O & \vdots \\
a_{n-1, n} & & a_{1, n-1}\\
\hline
a_{nn} - a_{11} & -a_{12} \ \cdots \ -a_{1,n-1}& 0
\end{array}
\right].
\end{array}
\]
By \cref{prop:hzero}, $H(AE_{1n} - E_{1n}A)H = O$ is equivalent to $(\be_{i_1} - \be_{j_1})\trans (AE_{1n} - E_{1n}A) (\be_{i_2} - \be_{j_2}) = 0$ for arbitrary $i_1 \neq j_1$ and $i_2 \neq j_2$.  
By taking $(i_1,j_1) = (1,2)$ and $(i_2,j_2) = (1,3)$, we see that $a_{2n} = a_{3n}$.  By switching $j_2$ to $4, \ldots, n-1$, we get $a_{2n} = a_{3n} = \cdots = a_{n-1,n}$.  Similarly, by taking $(i_1,j_1) = (2,n)$ and $(i_2,j_2) = (3,n), \ldots, (n-1,n)$, we have $a_{12} = a_{13} = \cdots = a_{1,n-1}$.  
Since $A\in\mptn_L(K_n - e)$, we know $a_{11} = -a_{12} - \cdots - a_{1,n-1} = -(n-2)a_{12}$ and $a_{nn} = -a_{2n} - \cdots - a_{n-1,n} = -(n-2)a_{2n}$.  Finally, we consider $(i_1,j_1) = (1,2)$ and $(i_2,j_2) = (1,n)$ and get 
\[
    a_{12} - a_{2n} - (a_{11} - a_{nn}) = (n-1)(a_{12} - a_{2n}) = 0,
\]
which implies $a_{12} = a_{2n}$.  

In summary, $A\in\mptn_L(K_n - e)$ is a weak matrix if and only if 
\[
    a_{2n} = \cdots = a_{n-1,n} = a_{12} = \cdots = a_{1,n-1} = -\alpha
\]
for some $\alpha > 0$. These computations and  \cite[Lemma~2.4]{IEPL} prove Theorem \ref{thm:K-e}.  

\begin{theorem}
\label{thm:K-e}
Let $e=\{1,n\}$ and $A \in \mathcal{S}_L(K_n - e)$. Then $A$ does not have the SSPWL if and only if it is of the form 
\[
A = \left[\begin{array}{c|c|c}
(n-2)\alpha & -\alpha \bone_{n-2}\trans & 0\\
\hline
-\alpha \bone_{n-2} & B + 2\alpha I_{n-2} & - \alpha \bone_{n-2}\\
\hline
 0 & - \alpha \bone_{n-2}\trans & (n-2) \alpha \rule{0pt}{2.5ex}
\end{array}
\right]
\]
where $B\in \mptn_L(K_{n-2})$ and $\alpha > 0$. Furthermore,
\begin{equation}
\label{alpha.spec}
\spec(A) = \{0, (n-2)\alpha, n \alpha, \mu_2 + 2\alpha, \dots, \mu_{n-2} +2\alpha \}
\end{equation}
where $\{0, \mu_2, \dots, \mu_{n-2}\} = \spec(B)$. 
\end{theorem}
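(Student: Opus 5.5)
The characterization of weak matrices is essentially done by the computation preceding the statement, so the plan is to assemble it. First, since $\overline{K_n - e}$ has the single edge $e=\{1,n\}$, any symmetric $X$ with $A\circ X = I\circ X = O$ is $X = cE_{1n}$. Hence $A$ fails the SSPWL if and only if $H(AE_{1n}-E_{1n}A)H=O$. By \cref{prop:hzero}\ref{item:hzero4}, this amounts to the vanishing of all bilinear forms $(\be_{i_1}-\be_{j_1})\trans(AE_{1n}-E_{1n}A)(\be_{i_2}-\be_{j_2})$. These vanish exactly when $a_{2n}=\cdots=a_{n-1,n}=a_{12}=\cdots=a_{1,n-1}=-\alpha$, using the row-sum relations $a_{11}=-(n-2)a_{12}$ and $a_{nn}=-(n-2)a_{2n}$.

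For the converse I would check sufficiency directly. With all these entries equal, the commutator has the block form displayed above with $a_{11}=a_{nn}$ and all border entries equal to $-\alpha$. One can then verify that $(AE_{1n}-E_{1n}A)$ maps $\bone^\perp$ into $\vspan\{\bone\}$, or simply that each test form above vanishes. Then I rewrite $A$ in block form: the first and last rows are forced as shown. The middle block $C$ has off-diagonal entries from $A$, and its diagonal entries equal the row sums including the two $-\alpha$ neighbours. Thus $C = B + 2\alpha I_{n-2}$, where $B$ is the weighted Laplacian of the induced $K_{n-2}$, so $B\in\mptn_L(K_{n-2})$. Conversely, every such block matrix lies in $\mptn_L(K_n-e)$ and satisfies the equalities, so it is weak.

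For the spectrum, I would exhibit eigenvectors; this is the content attributed to \cite[Lemma~2.4]{IEPL}, but it is easy to do directly. The vector $\bone$ gives eigenvalue $0$. The vector $\be_1-\be_n$ gives $(n-2)\alpha$, since the middle rows receive $-\alpha+\alpha=0$. The vector $(n-2,-2\bone_{n-2}\trans,n-2)\trans$ is orthogonal to $\bone$, and a quick check gives eigenvalue $n\alpha$. In the first entry we get $(n-2)^2\alpha+2(n-2)\alpha=n(n-2)\alpha$. In a middle entry we get $-2\alpha(n-2)-4\alpha+\ldots$, which should be computed carefully using $B\bone=\bzero$, and it yields $-2n\alpha$. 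Finally, for each eigenvector $\bu\perp\bone$ of $B$ with eigenvalue $\mu_i$, the vector $(0,\bu\trans,0)\trans$ is an eigenvector of $A$ with eigenvalue $\mu_i+2\alpha$, because the border rows contribute $-\alpha\bone\trans\bu=0$. These $n$ vectors are mutually orthogonal, which gives \eqref{alpha.spec}.

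The main obstacle is only bookkeeping: confirming that the listed equalities really make every test form in \cref{prop:hzero} vanish, rather than just the ones used to derive them. I would handle this by noting that, under the equalities, $AE_{1n}-E_{1n}A = -\alpha(\bone\be_n\trans-\be_n\bone\trans) + \alpha(\bone\be_1\trans - \be_1\bone\trans)$ up to the correction on entries $(1,n)$ and $(n,1)$. I would verify this identity entrywise; then $H(\cdot)H=O$ is immediate from $H\bone=\bzero$.
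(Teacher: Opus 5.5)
Your proposal is correct in substance, and for the characterization of weak matrices it follows the paper's argument. The single non-edge forces $X=cE_{1n}$, and the test forms of \cref{prop:hzero} together with the row-sum relations force $a_{12}=\cdots=a_{1,n-1}=a_{2n}=\cdots=a_{n-1,n}$.

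You go beyond the paper in two places. First, you check the converse explicitly; the paper derives the equalities from a few chosen test forms and then simply asserts ``if and only if''. Second, you prove the spectral formula directly instead of citing \cite[Lemma~2.4]{IEPL}, using the orthogonal eigenvectors $\bone$, $\be_1-\be_n$, $(n-2,-2\bone_{n-2}\trans,n-2)\trans$ and $(0,\bu\trans,0)\trans$. Your eigenvalue computations and orthogonality claims are all correct, so your version is self-contained.

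One step needs repair: the identity in your last paragraph is wrong as written. Under the equalities, for $2\le i,j\le n-1$ the $(i,1)$ entry of $AE_{1n}-E_{1n}A$ is $a_{in}=-\alpha$, and the $(1,j)$ entry is $-a_{nj}=+\alpha$. So the border entries are not all $-\alpha$, and the $\be_1$-term must carry coefficient $-\alpha$, not $+\alpha$. With the correct sign, no correction is needed at all:
\[
AE_{1n}-E_{1n}A=-\alpha\bigl(\bone(\be_1+\be_n)\trans-(\be_1+\be_n)\bone\trans\bigr)
\]
holds exactly. The $(1,n)$ and $(n,1)$ entries vanish precisely because $a_{11}=a_{nn}=(n-2)\alpha$.

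This matters because your hedge ``up to the correction on entries $(1,n)$ and $(n,1)$'' would be fatal, not harmless. Such a correction is a multiple of $\be_1\be_n\trans-\be_n\be_1\trans$, and
\[
(\be_1-\be_2)\trans(\be_1\be_n\trans-\be_n\be_1\trans)(\be_n-\be_2)=1,
\]
so by \cref{prop:hzero} it is not annihilated by $H(\cdot)H$. With the exact identity, $H(AE_{1n}-E_{1n}A)H=O$ follows immediately from $H\bone=\bzero$. This is equivalent to your other, correct formulation: the commutator maps $\vspan\{\bone\}^\perp$ into $\vspan\{\bone\}$.
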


\subsection{Connected Graphs of Order Four}

In this subsection, we characterize all weak matrices for connected graphs of order four and use them to find the potential boundaries of the spectra.  To be more precise, for each connected graph of order four $G$, we study the spectra of matrices $A\in\mptn_L(G)$ under the normalization that $\tr(A) = 2m$, where $m = |E(G)|$.  Thus, we may assume $\lambda_2 + \lambda_3 + \lambda_4 = 2m$ and plot all spectra on the $\lambda_2,\lambda_3$-plane.  Each blue dot $(\lambda_2, \lambda_3)$ in \cref{fig:g4-weak} represents a spectrum $\{\lambda_2, \lambda_3, 2m - \lambda_2 - \lambda_3\}$ given by some matrix $A\in\mptn_L(G)$ with $\tr(A) = 2m$.  They are generated from the same simulation process as in \cite{IEPL}.  The spectra of weak matrices lie on the orange curves.  By the Bifurcation lemma (\cref{lem:bifurcation}), if a matrix has the SSPWL, then all nearby spectra are realizable as well.  That is, the boundaries of the blue dots only possibly occur on the orange curves, other than the trivial boundaries $\lambda_2 = 0$, $\lambda_2 = \lambda_3$, and $\lambda_3 = \lambda_4$ (equivalently, $2\lambda_3 + \lambda_4 = 2m$).

\begin{figure}[h]
\centering
\includegraphics[width=\linewidth]{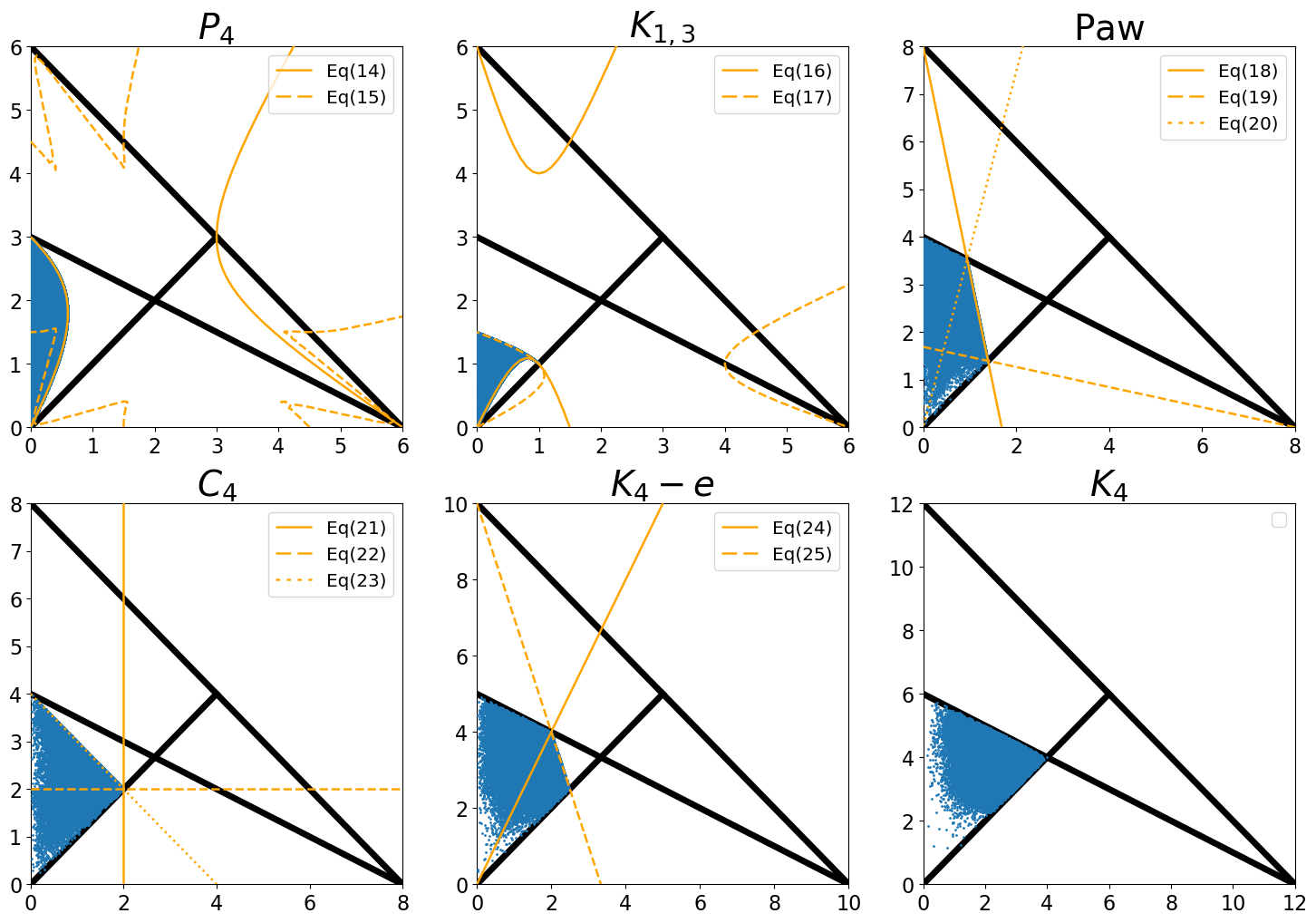}
\caption{Potential spectra for each graphs and potential boundaries led by weak matrices, drawn on the $\lambda_2,\lambda_3$-plane.}
\label{fig:g4-weak}
\end{figure}

Here we describe a general scheme to study the weak matrices of a given graph $G$ and to obtain the equations for the orange curves.  We assume $G$ has $n$ vertices and $m$ edges.
\begin{itemize}
\item Consider a variable matrix $A\in\mptn_L(G)$ and calculate its verification matrix $\Psi$.  Whenever necessary, we normalize the matrix $A$ into $\frac{2m}{\tr(A)}A$ such that $\tr(A) = 2m$.  
\item Compute the maximal minors of $\Psi$, that is, the determinant of the maximal square submatrices.  Thus, $A$ loses the SSPWL if and only if these maximal minors are zero.  
\item Compute the sum of all $k\times k$ minors of $A$ as $s_k$ and then find equations of $s_k$'s implied by the maximal minors.  Let $\mathcal{I}$ be the ideal generated by the maximal minors and the relations between $s_k$'s and the weights.  Through the elimination ideal of $\mathcal{I}$ by removing all weight variables, one may find the equations of $s_k$'s.  Recall that we may normalize $s_1 = 2m$.
\item Based on Vieta's formulas, substitute $s_k$'s with the eigenvalues $\lambda_i$'s in the previously found equations of $s_k$'s.  Recall that we may normalize $\lambda_4 = 2m - \lambda_2 - \lambda_3$.
\end{itemize}

In the following cases, we use the same $P$ matrix as in \cref{ex:p4sspwl} for generating the verification matrix $\Psi$.

\begin{figure}[h]
\centering
\begin{center}
\begin{tikzpicture}
\node[label={below:$1$}] (1) at (0,0) {};
\node[label={below:$2$}] (2) at (1,0) {};
\node[label={below:$3$}] (3) at (2,0) {};
\node[label={below:$4$}] (4) at (3,0) {};
\draw (1) -- node[midway,above,rectangle,draw=none]{$x$}
(2) -- node[midway,above,rectangle,draw=none]{$z$}
(3) -- node[midway,above,rectangle,draw=none]{$y$}
(4);
\node[rectangle,draw=none] (name) at (1.5,-1.5) {$P_4$};
\end{tikzpicture}
\hfil
\begin{tikzpicture}
\node[label={below:$1$}] (1) at (-1,0) {};
\node[label={below:$2$}] (2) at (0,0) {};
\node[label={above:$3$}] (3) at (30:1) {};
\node[label={below:$4$}] (4) at (-30:1) {};
\draw (1) -- node[midway,above,rectangle,draw=none]{$x$}
(2) -- node[midway,above,rectangle,draw=none]{$y$}
(3) -- node[midway,right,rectangle,draw=none]{$z$}
(4) -- node[midway,above,rectangle,draw=none]{$w$}
(2);
\node[rectangle,draw=none] (name) at (0,-1.5) {$\Paw$};
\end{tikzpicture}
\hfil
\begin{tikzpicture}
\foreach \i in {1,2,3,4} {
    \pgfmathsetmacro{\ang}{135 + 90*(\i - 1)}
    \node[label={\ang:$\i$}] (\i) at (\ang:0.707) {};
}
\draw (1) -- node[midway,left,rectangle,draw=none]{$x$}
(2) -- node[midway,below,rectangle,draw=none]{$y$}
(3) -- node[midway,right,rectangle,draw=none]{$z$}
(4) -- node[midway,above,rectangle,draw=none]{$w$}
(1);
\node[rectangle,draw=none] (name) at (0,-1.5) {$C_4$};
\end{tikzpicture}
\end{center}
\caption{Some weighted graphs of order four.}
\label{fig:wg4}
\end{figure}
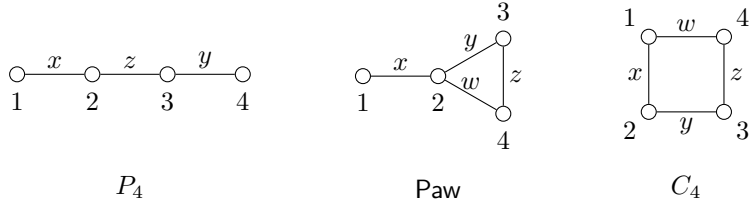

\subsubsection{Path Graph \texorpdfstring{$P_4$}{P4}}

Consider the weighted graph $P_4$ with weights $x,z,y$ as shown in \cref{fig:wg4}.  Let $A\in\mptn_L(P_4)$ be the variable matrix and $\Psi$ its verification matrix.  Then
\[
    \Psi = \begin{bmatrix}
        -2x + y + 2z & -y + z & x - 2y - z \\
        -y & -2x + y & -x + 2y \\
        y & 2x - y + z & x - z
    \end{bmatrix},
\]
with determinant equal to
\[
    \det(\Psi) = 2(x - y)(4xy - 3xz - 3yz + 3z^2).
\]
Thus, $A$ does not have the SSPWL if and only if 
\begin{align}
    x - y &= 0, \text{ or } \label{eq:w_out} \\
    4xy - 3xz - 3yz + 3z^2 &= 0. \label{eq:w_in}
\end{align}

Let $\spec(A) = \{0, \lambda_2, \lambda_3, \lambda_4\}$ with $\lambda_2 < \lambda_3 < \lambda_4$.  Let $s_k$ be the sum of all $k\times k$ principal minors of $A$.  By direct computation, we have 
\begin{equation}
\label{eq:swlam}
    \begin{array}{rll}
        s_1 &= 2x + 2y + 2z &= \lambda_2 + \lambda_3 + \lambda_4, \\
        s_2 &= 3xz + 3yz + 4xy &= \lambda_2\lambda_3 + \lambda_2\lambda_4 + \lambda_3\lambda_4, \\
        s_3 &= 4xyz &= \lambda_2\lambda_3\lambda_4.
    \end{array}
\end{equation}

In the following, we always normalize the matrix $A$ by $\frac{2m}{\tr(A)}A$ and assume $\tr(A) = s_1 = 2m = 6$.  

Under this normalization with $z = 3 - x - y$, \cref{eq:w_out,eq:w_in} becomes 
\begin{align}
    x - y &= 0, \text{ or } \label{eq:w_out_n} \\
    6x^2 + 16xy + 6y^2 - 27x - 27y + 27 &= 0. \label{eq:w_in_n}
\end{align}
Thus, we observe that \cref{eq:w_out_n,eq:w_in_n} led to 
\begin{align}
    s_2^3 - 9s_2^2 - 30s_2s_3 + 8s_3^2 + 243s_3 &= 0, \text{ or } \label{eq:s_out_n} \\
    4s_2^3 - 27s_2^2 - 162s_2s_3 + 81s_3^2 + 972s_3 &= 0. \label{eq:s_in_n}
\end{align}
To see these, we substitute $s_k$'s with $x,z,y$ using \cref{eq:swlam} and normalize with $z = 3 - x - y$.  Then \cref{eq:s_in_n} is equivalent to 
\begin{equation}
\label{eq:rel_out_n}
\begin{aligned}
    (x - y)^2  (27x^4 + 108x^3y - 243x^3 + 178x^2y^2 - 693x^2y + 810x^2 & \\
    + 108xy^3 - 693xy^2 + 1512xy - 1215x & \\
    + 27y^4 - 243y^3 + 810y^2 - 1215y + 729) &= 0.
\end{aligned}
\end{equation}
Also, \cref{eq:s_out_n} is equivalent to
\begin{equation}
\label{eq:rel_in_n}
    (-3x + y)  (-x + 3y)  (6x^2 + 16xy - 27x + 6y^2 - 27y + 27)^2 = 0.    
\end{equation}

\begin{remark}
Behind the scenes, \cref{eq:s_out_n,eq:s_in_n} are obtained via the elimination ideal.  Let $\mathcal{I}$ be the ideal generated by \cref{eq:w_out_n}, the three relations between $s_i$'s and $x,z,y$ in \cref{eq:swlam}, and the normalization $s_1 = 6$.  The elimination ideal of $\mathcal{I}$ by removing $x,y,z$ and $s_1$ is generated by \cref{eq:s_out_n}.  Similarly, if we replace \cref{eq:w_out} with \cref{eq:w_in} in the generators of $\mathcal{I}$, then the elimination ideal is generated by \cref{eq:s_in_n}.
\end{remark}

After replacing $s_k$'s with $\lambda_i$'s using \cref{eq:swlam} and normalizing with $\lambda_4 = 6 - \lambda_2 - \lambda_3$, \cref{eq:s_out_n} leads to
\begin{equation}
\label{eq:lam_out_all}
\begin{aligned}
    (-\lambda_2^2 - \lambda_2\lambda_3 + 6\lambda_2 + \lambda_3^2 - 3\lambda_3)
    (-\lambda_2^2 + \lambda_2\lambda_3 + 3\lambda_2 + \lambda_3^2 - 6\lambda_3) & \\
    (\lambda_2^2 + 3\lambda_2\lambda_3 - 9\lambda_2 + \lambda_3^2 - 9\lambda_3 + 18) &= 0.
\end{aligned}
\end{equation}
In terms of $0 < \lambda_2 \leq \lambda_3 \leq \lambda_4$ and $\lambda_2 + \lambda_3 + \lambda_4 = 6$, only 
\begin{equation}
\label{eq:lam_out}
\begin{split}
    -\lambda_2^2 - \lambda_2\lambda_3 + 6\lambda_2 + \lambda_3^2 - 3\lambda_3 = 0
\end{split}
\end{equation}
is effective.

Under the same replacement and normalization, \cref{eq:s_in_n} leads to 
\begin{equation}
\label{eq:lam_in}
\begin{aligned}
    4\lambda_2^6 + 12\lambda_2^5\lambda_3 - 72\lambda_2^5 - 57\lambda_2^4\lambda_3^2 - 54\lambda_2^4\lambda_3 + 459\lambda_2^4 & \\
    - 134\lambda_2^3\lambda_3^3 + 936\lambda_2^3\lambda_3^2 - 594\lambda_2^3\lambda_3 - 1188\lambda_2^3 & \\
    - 57\lambda_2^2\lambda_3^4 + 936\lambda_2^2\lambda_3^3 - 4023\lambda_2^2\lambda_3^2 + 3564\lambda_2^2\lambda_3 + 972\lambda_2^2 & \\
    + 12\lambda_2\lambda_3^5 - 54\lambda_2\lambda_3^4 - 594\lambda_2\lambda_3^3 + 3564\lambda_2\lambda_3^2 - 3888\lambda_2\lambda_3 & \\
    + 4\lambda_3^6 - 72\lambda_3^5 + 459\lambda_3^4 - 1188\lambda_3^3 + 972\lambda_3^2 &= 0.
\end{aligned}
\end{equation}

\cref{eq:lam_out,eq:lam_in} are presented in \cref{fig:g4-weak} as the orange curves for $P_4$.

\subsubsection{Star Graph \texorpdfstring{$K_{1,3}$}{K13}}

Consider the weighted graph $K_{1,3}$ with weights $x,y,z$.  Let $A\in\mptn_L(K_{1,3})$ be the variable matrix.  By \cref{thm:weakstar}, $A$ is a weak matrix if and only if $x - y = 0$, $x - z = 0$, or $y - z = 0$.  Each of these relations imply
\[
    64 s_{2}^{3} - 432 s_{2}^{2} - 1944 s_{2} s_{3} + 729 s_{3}^{2} + 11664 s_{3} = 0,
\]
which is equivalent to
\[
    (x - y)^2 (x - z)^2 (y - z)^2 = 0
\]
when $z = 3 - x - y$.  

By substituting $s_k$'s with $\lambda_i$'s and normalizing with $\lambda_4 = 6 - \lambda_2 - \lambda_3$, we obtain the equation 
\begin{align*}
(8\lambda_2^2 + 17\lambda_2\lambda_3 + 8\lambda_3^2 - 84\lambda_2 - 84\lambda_3 + 216)&(8\lambda_2^2 - \lambda_2\lambda_3 - \lambda_3^2 - 12\lambda_2 + 6\lambda_3)\\
&(\lambda_2^2 + \lambda_2\lambda_3 - 8\lambda_3^2 - 6\lambda_2 + 12\lambda_3) = 0.
\end{align*}
In terms of $0 < \lambda_2 \leq \lambda_3 \leq \lambda_4$ and $\lambda_2 + \lambda_3 + \lambda_4 = 6$, only
\begin{align}
    8\lambda_2^2 - \lambda_2\lambda_3 - \lambda_3^2 - 12\lambda_2 + 6\lambda_3 &= 0 \text{ or} \label{eq:star-down} \\
    \lambda_2^2 + \lambda_2\lambda_3 - 8\lambda_3^2 - 6\lambda_2 + 12\lambda_3 &= 0 \label{eq:star-up}
\end{align}
are effective.

\cref{eq:star-down,eq:star-up} are presented in \cref{fig:g4-weak} as the orange curves for $K_{1,3}$.

\subsubsection{Paw Graph \texorpdfstring{$\Paw$}{Paw}}

Consider the weighted graph $\Paw$ with weights $x,y,z,w$ as shown in \cref{fig:wg4}.  Let $A\in\mptn_L(\Paw)$ be the variable matrix and $\Psi$ its verification matrix.  Then
\[
    \Psi = \begin{bmatrix}
        -2x + 2y + z & y - z & x - y - 2z \\
        w - z & 2w - 2x + z & w - x + 2z
    \end{bmatrix}.
\]
By computing its maximal minors, $\Psi$ is not of full row rank if and only if
\[
    \begin{aligned}
        -4w x + 4x^{2} + 3w y - 4x y + 3w z - 4x z + 3y z &= 0, \\
        -3w x + 2x^{2} + 3w y - 2x y + 3w z - 4x z + 3y z &= 0, \text{ and}\\
-2w x + 2x^{2} + 3w y - 3x y + 3w z - 4x z + 3y z &= 0.
    \end{aligned}
\]
Note that these three equations are joined by ``and'', in contrast to the equations for $P_4$ (and $K_{1,3}$) are joined by ``or''.  Equivalently, $A$ is a weak matrix if and only if one of the three equations are satisfied.  These three equations imply 
\[
    6 s_{2}^{3} - 64 s_{2}^{2} - 320 s_{2} s_{3} + 125 s_{3}^{2} + 3072 s_{3} = 0.
\]

By substituting $s_k$'s with $\lambda_i$'s and normalizing with $\lambda_4 = 8 - \lambda_2 - \lambda_3$, we obtain an equation  
\begin{align*}
(6\lambda_2^2 + 6\lambda_2\lambda_3 + \lambda_3^2 - 48\lambda_2 - 16\lambda_3 + 64) & \\
(\lambda_2^2 + 6\lambda_2\lambda_3 + 6\lambda_3^2 - 16\lambda_2 - 48\lambda_3 + 64) & \\ 
(\lambda_2^2 - 4\lambda_2\lambda_3 + \lambda_3^2) &= 0,
\end{align*}
which can be factored further as 
\[
    \begin{aligned}
        (\lambda_2 + (\frac{1}{2} - \frac{\sqrt{3}}{6})\lambda_3 + (-4 + \frac{4\sqrt{3}}{3}))
(\lambda_2 + (\frac{1}{2} + \frac{\sqrt{3}}{6})\lambda_3 + (-4 - \frac{4\sqrt{3}}{3})) & \\
(\lambda_2 + (3 - \sqrt{3})\lambda_3 - 8)
(\lambda_2 + (3 + \sqrt{3})\lambda_3 - 8) & \\
(\lambda_2 + (-2 + \sqrt{3})\lambda_3)
(\lambda_2 + (-2 - \sqrt{3})\lambda_3) &= 0.
    \end{aligned}
\]
In terms of $0 < \lambda_2 \leq \lambda_3 \leq \lambda_4$ and $\lambda_2 + \lambda_3 + \lambda_4 = 8$, only 
\begin{align}
    \lambda_2 + (\frac{1}{2} - \frac{\sqrt{3}}{6})\lambda_3 + (-4 + \frac{4\sqrt{3}}{3}) &= 0, \label{eq:paw-right} \\
    \lambda_2 + (3 + \sqrt{3})\lambda_3 - 8 &= 0, \text{ or} \label{eq:paw-down} \\
    (\lambda_2 + (-2 + \sqrt{3})\lambda_3) &= 0 \label{eq:paw-up}
\end{align}
is effective.

\cref{eq:paw-right,eq:paw-down,eq:paw-up} are presented in \cref{fig:g4-weak} as the orange curves for $\Paw$.

\subsubsection{Cycle Graph \texorpdfstring{$C_4$}{C4}}

Consider the weighted graph $C_4$ with weights $x,y,z,w$ as shown in \cref{fig:wg4}.  Let $A\in\mptn_L(C_4)$ be the variable matrix and $\Psi$ its verification matrix.  Then
\[
    \Psi = \begin{bmatrix}
        -w - 2x + 2y + z & y - z & 2w + x - y - 2z \\
        -w + z & -2w + 2x + y - z & x - y
    \end{bmatrix}.
\]
By computing its maximal minors, $\Psi$ is not of full row rank if and only if
\[
    \begin{aligned}
        2w^{2} + 2w x - 4x^{2} - 4w y + 2x y + 2y^{2} - 2w z + 4x z - 2y z &= 0, \\
        2w^{2} - 2x^{2} + 4x y - 2y^{2} - 4w z + 2z^{2} &= 0, \text{ and} \\
        4w^{2} - 2w x - 2x^{2} - 4w y + 2x y - 2w z + 4x z + 2y z - 2z^{2} &= 0.
    \end{aligned}
\]
Using the Gr\"obner basis, the three equations above are equivalent to 
\[
    \begin{aligned}
        (w - y)(w - z)(y - z) &= 0, \\
        (w + x - y - z)(w - x - y + z) &= 0, \\
        (w - x - y + z)(w - z) &= 0, \text{ and} \\
        (w + x - y - z)(y - z) &= 0.
    \end{aligned}
\]
Therefore, $A$ is a weak matrix if and only if one of the following hold:
\begin{itemize}
    \item $w = x$ and $y = z$.
    \item $w = y$ and $x = z$, or 
    \item $w = z$ and $x = y$. 
\end{itemize}
These relations on weights lead to the equation  
\[
    8 s_{2}^{2} - 6 s_{2} s_{3} + s_{3}^{2} - 224 s_{2} + 88 s_{3} + 1536 = 0.
\]

In \cref{fig:g4-weak}, $C_4$, the equations of the three colored curves are as follows:
\[
    (\lambda_2 - 2)(\lambda_2 - 4)(\lambda_3 - 2)(\lambda_3 - 4)(\lambda_2 + \lambda_3 - 4)(\lambda_2 + \lambda_3 - 6) = 0.
\]
In terms of $\lambda_2 \leq \lambda_3 \leq \lambda_4$ and $\lambda_2 + \lambda_3 + \lambda_4 = 8$, only 
\begin{align}
    \lambda_2 - 2 &= 0 \label{eq:c4-v}, \\
    \lambda_3 - 2 &= 0, \text{ or} \label{eq:c4-h}\\
    \lambda_2 + \lambda_3 - 4 &= 0 \label{eq:c4-d}
\end{align}
is effective.

\cref{eq:c4-v,eq:c4-h,eq:c4-d} are presented in \cref{fig:g4-weak} as the orange curves for $C_4$.

\subsubsection{\texorpdfstring{$K_4$}{K4} Minus an Edge Graph \texorpdfstring{$K_4 - e$}{K4 - e}}

\cref{ssec:knme} characterized the weak matrices in $\mptn_L(K_4 - e)$.  With $n = 4$, their spectra have the form 
\[
    \{0, 2\alpha, 4\alpha, \mu_2 + 2\alpha\} \text{ with }\alpha, \mu_2 > 0,
\]
so a spectrum $\{\lambda_1, \lambda_2, \lambda_3, \lambda_4\}$ with $\lambda_1 = 0$ and with the normalization $\lambda_2 + \lambda_3 + \lambda_4 = 10$ can be realized by a weak matrix in $\mptn_L(K_4 - e)$ if and only if 
\begin{align}
    \lambda_3 &= 2\lambda_2 \text{ or} \label{eq:k4e-left} \\ 
    \lambda_4 &= 2\lambda_2 \text{ (equivalently, }3\lambda_2 + \lambda_3 = 10\text{)}. \label{eq:k4e-right}
\end{align}

\cref{eq:k4e-left,eq:k4e-right} are presented in \cref{fig:g4-weak} as the orange curves for $C_4$.

\subsubsection{Complete Graph \texorpdfstring{$K_4$}{K4}}

Every matrix in $\mptn_L(K_4)$ has the SSPWL by definition, so no additional potential boundaries are generated.  

\section{Jacobian Methods and Other Strong Properties}
\label{sec:jacall}

As mentioned earlier in \cref{sec:jac}, the Jacobian Method was introduced as a tool to study spectrally arbitrary sign patterns.  This method was then extended to the Nilpotent-Centralizer Method and the non-symmetric strong spectral property (nSSP).  In this section, we make a comprehensive study on the Jacobian Methods under different settings.

\subsection{Jacobian Method and the nSSP}

A \emph{sign pattern} $P$ is a matrix whose entries are in $\{+,-,0\}$.  The \emph{quantitative class} of $P$, denoted by $\mathcal{Q}(P)$, is the set of all matrices whose entries follow the signs of the corresponding entries in $P$.  Note that the arguments in this subsection also hold for zero-nonzero patterns, while we only mention the sign patterns.  Let $A\in Q(\mathcal{P}) \subseteq \mat$.  According to \cite[Section~5]{bifur} the nSSP is equivalent to 
\[T_1 + T_2 = W,\]
where
\begin{itemize}
    \item $T_1 = \vspan(\mathcal{Q}(P))$, 
    \item $T_2 = \{-LA + AL: L\in\mat\}$, 
    \item $W = \mat$.
\end{itemize}

The Jacobian Method under this setting relies on the  Jacobian matrix
\begin{align*}
    J_A = \begin{bmatrix}
        \displaystyle{\frac{d(s_1,\ldots, s_n)}{d({\rm free\ entries})}}
    \end{bmatrix}
\end{align*}
and is a powerful tool in certifying a spectrally arbitrary sign pattern \cite{DJOvdD00,GB13,bifur}.
Let $e_j$ be the indices of the nonzero entries and $E_{e_j}$ the $0$-$1$ matrix whose entry at $e_j$ is $1$ and $0$ otherwise.  Then, by the chain rule, 
\[
    J_A = \begin{bmatrix}
        \inp{\nabla s_i}{E_{e_j}}
    \end{bmatrix}.
\]

\begin{theorem}
\label{thm.Jac-nSSP}
Let $A\in Q(\mathcal{P}) \subseteq \mat$ and $J_A$ its associated Jacobian matrix. Then $J_A$ has full row rank if and only if the minimal polynomial of $A$ has degree $n$ and $A$ has the nSSP.
\end{theorem}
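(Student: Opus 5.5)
The argument will mirror the proof of \cref{thm.SSPWL-Jac}, with ambient space $W=\mat$ in place of $\msymz$, $T_1=\vspan(\mathcal{Q}(P))$ with basis $\{E_{e_j}\}$, and $T_2=\{-LA+AL: L\in\mat\}$. The $i,j$-entry of $J_A$ is $\inp{\nabla s_i}{E_{e_j}}$, so full row rank of $J_A$ means the $n$ rows, indexed by $\nabla s_1,\ldots,\nabla s_n$, are linearly independent after pairing against the basis of $T_1$. I will split into two cases according to the degree of the minimal polynomial of $A$.

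\emph{Case 1: the minimal polynomial has degree less than $n$.} By \cref{lem:gradientind}, $\{\nabla s_1,\ldots,\nabla s_n\}$ is linearly dependent. A nontrivial vanishing combination $\sum c_i\nabla s_i = O$ gives $\sum c_i\inp{\nabla s_i}{E_{e_j}}=0$ for every $j$, so the rows of $J_A$ are dependent and $J_A$ is not of full row rank.

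\emph{Case 2: the minimal polynomial has degree $n$.} By \cref{lem:gradientind}, the $\nabla s_k$ are independent and span $\vspan\{I,A\trans,\ldots,(A\trans)^{n-1}\}$. I then identify this span with $T_2^\perp$.
\begin{itemize}
\item[(a)] Each $\nabla s_k$ lies in $T_2^\perp$. Take the trajectory $A(t)=e^{-tL}Ae^{tL}$; $s_k$ is constant along it, and $\dot A(0)=AL-LA$. The chain-rule identity before \cref{ex:nsymortho} then gives $\inp{\nabla s_k}{AL-LA}=0$ for all $L$.
\item[(b)] Directly, $T_2^\perp=\{Y:\inp{Y}{AL-LA}=0\ \forall L\}$. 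Since $\tr(Y\trans(AL-LA)) = \tr((A\trans Y - YA\trans)\trans L)$, this is the set of $Y$ with $A\trans Y=YA\trans$, i.e. the centralizer of $A\trans$.
\item[(c)] $A\trans$ is nonderogatory, since its minimal polynomial has degree $n$. Hence its centralizer has dimension exactly $n$ and equals the polynomials in $A\trans$.
\end{itemize}
So $\{\nabla s_1,\ldots,\nabla s_n\}$ is a basis of $T_2^\perp$. Applying \cref{prop:tnequiv} with $N_2=T_2^\perp$, this basis, and $\{E_{e_j}\}$ a basis of $T_1$, we get that $T_1+T_2=W$ (the nSSP) if and only if the matrix $[\inp{E_{e_j}}{\nabla s_i}]$ has full rank in the $\nabla s_i$ index. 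That is exactly full row rank of $J_A$.

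One orientation point must be checked. \cref{prop:tnequiv} is stated with the roles "basis of $T_2$" and "basis of $N_2$", but its proof uses $\alpha$ as a basis of the space being projected onto. I will therefore apply it with $T_1$ in that role, so the projection goes from $N_2$ to $T_1$, mirroring how it was used in \cref{thm.SSPWL-Jac}. I also need to match "full column rank" there to "full row rank" of $J_A$ by transposing.

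The main obstacle is the dimension count in step (c): the centralizer of a nonderogatory matrix consists exactly of its polynomials and has dimension $n$. This is a standard fact (Frobenius), and I will cite it rather than reprove it. Alternatively, it follows from the known formula for the centralizer dimension, $\sum_\lambda \sum_i (2i-1)\,q_{\lambda,i}$, where $q_{\lambda,1}\ge q_{\lambda,2}\ge\cdots$ are the sizes of the Jordan blocks for $\lambda$; this sum equals $n$ precisely when there is a single Jordan block per eigenvalue. With (c) established, combining (a) and the independence from \cref{lem:gradientind} gives the basis claim, and the rest is bookkeeping.
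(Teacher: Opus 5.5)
Your proposal is correct and follows essentially the same argument as the paper. It uses the same two-case split via \cref{lem:gradientind}, identifies $\{\nabla s_1,\ldots,\nabla s_n\}$ as a basis of $T_2^\perp$, and applies \cref{prop:tnequiv} with $\{E_{e_j}\}$ as a basis of $T_1$. You also make explicit three points the paper leaves implicit: each $\nabla s_k$ lies in $T_2^\perp$; $T_2^\perp$ is the centralizer of $A\trans$, whose dimension is $n$ exactly because $A$ is nonderogatory in Case~2; and \cref{prop:tnequiv} must be read with $\alpha$ a basis of $T_1$, as its proof and its applications require.
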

\begin{proof}
We consider two cases.  Firstly, when the minimal polynomial of $A$ has degree less than $n$, the dimension of the span of $\{I, A\trans, \ldots, (A\trans)^{n-1}\}$ is less than $n$, so $\{\nabla s_1, \ldots, \nabla s_n\}$ is linearly dependent by \cref{lem:gradientind}.  Therefore, the rows of $J_A$ are dependent and $J_A$ does not have full row rank.  

Secondly, when the minimal polynomial of $A$ has degree $n$, $\{\nabla s_1, \ldots, \nabla s_n\}$ is linearly independent by \cref{lem:gradientind}.  Also, $T_2$ is known to have codimension $\dim(T_2^\perp) = n$.  Therefore, $\{\nabla s_1, \ldots, \nabla s_n\}$ is a basis of $N_2 = T_2^\perp$.  On the other hand, $\{E_{e_j}\}$ for nonzero entries $e_j$ is a basis of $T_1$.  The result now follows from \cref{prop:tnequiv}.
\end{proof}

\begin{remark}
\cref{thm.Jac-nSSP} explains why the Nilpotent-Jacobian Method in \cite{DJOvdD00}, used  for showing that a pattern is spectrally arbitrary, requires a nilpotent matrix of index $n$.  The paper \cite{GB13} which introduced the Nilpotent-Centralizer method  assumes the same condition.  However,  as explained in \cite{bifur},  nilpotency of index $n$ is not required with the nSSP condition to verify that a pattern is spectrally arbitrary.
\end{remark}

\subsection{Jacobian Method and the SSP}

Let $A\in\mptn(G)\subseteq\msym$.  According to \cite[Proposition~1.1]{bifur} the SSP is equivalent to 
\[T_1 + T_2 = W,\]
where
\begin{itemize}
    \item $T_1 = \vspan(\mptn(G))$, 
    \item $T_2 = \{K\trans A + AK: K\in\mskew\}$, 
    \item $W = \msym$.
\end{itemize}

The Jacobian Method under this setting relies on the Jacobian matrix 
\[
    J_A = \begin{bmatrix}
        \displaystyle{\frac{d(s_1,\ldots, s_n)}{d({\rm free\ entries})}}
    \end{bmatrix},
\]
where the free entries include the diagonal entries and the entries corresponding to edges.  
Let $e_j$ be the indices of the free entries and $E_{e_j}$ the $0$-$1$ matrix with the entry at $e_j$ equal to $1$ and all other entries are $0$.  Note that $E_{e_j}$ has a $1$ when $e_j$ is diagonal and two $1$'s when $e_j$ is off-diagonal.  Then, by the chain rule, 
\[
    J_A = \begin{bmatrix}
        \inp{\nabla s_i}{E_{e_j}}
    \end{bmatrix}.
\]

\begin{theorem}
Let $A\in\mptn(G)\subseteq\msym$ and let $J_A$ be its  associated Jacobian matrix. Then $J_A$ has full rank if and only if the eigenvalues of $A$ are distinct and $A$ has the SSP.
\end{theorem}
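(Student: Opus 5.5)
The plan is to mirror the proofs of \cref{thm.SSPWL-Jac} and \cref{thm.Jac-nSSP}: work in the ambient space $W = \msym$ with $T_1 = \vspan(\mptn(G))$ and $T_2 = \{K\trans A + AK : K\in\mskew\}$, and apply \cref{prop:tnequiv}. Here $J_A$ is the $n\times(n + |E(G)|)$ matrix $\begin{bmatrix}\inp{\nabla s_i}{E_{e_j}}\end{bmatrix}$, where the $E_{e_j}$ form a basis of $T_1$. By \cref{prop:tnequiv}, $T_1 + T_2 = W$ holds exactly when this Gram-type matrix has full rank in the direction indexed by a basis of $N_2 = T_2^\perp$. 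So the task reduces to showing that $\{\nabla s_1,\ldots,\nabla s_n\}$ is a basis of $N_2$ precisely when the eigenvalues are distinct. Throughout, ``full rank'' means full row rank $n$.

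\textbf{Case 1: $A$ has a repeated eigenvalue.} Since $A$ is symmetric, it is diagonalizable, so its minimal polynomial has degree equal to the number $q$ of distinct eigenvalues, and $q<n$. By \cref{lem:gradientind}, $\{\nabla s_1,\ldots,\nabla s_n\}$ spans $\vspan\{I,A,\ldots,A^{n-1}\}$ (note $A\trans = A$), which has dimension $q<n$. The rows of $J_A$ are the functionals $\inp{\nabla s_i}{\cdot}$ restricted to the $E_{e_j}$, so they are linearly dependent and $J_A$ is not of full row rank. Hence full rank forces distinct eigenvalues.

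\textbf{Case 2: the eigenvalues of $A$ are distinct.} First, each $\nabla s_k$ is symmetric, since it lies in the span of the powers of $A$, so each $\nabla s_k\in\msym$. Next, each $\nabla s_k$ lies in $N_2$. For $K\in\mskew$, the trajectory $A(t) = e^{-tK}Ae^{tK}$ preserves $s_k$ and has $\dot A(0) = K\trans A + AK$; by the chain rule $\inp{\nabla s_k}{K\trans A + AK} = 0$. Third, the dimension of $N_2$: diagonalizing $A = QDQ\trans$, we have $N_2 = \{Y\in\msym : AY = YA\}$ (by the same trace computation as in \cref{prop:range}\ref{item:range-fkp}), and this space has dimension $\sum_i \binom{m_i+1}{2} = n$ when all $m_i = 1$. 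By \cref{lem:gradientind}, the $n$ matrices $\nabla s_k$ are linearly independent, so they form a basis of $N_2$. The symmetric matrices $E_{e_j}$ (diagonal and edge entries) form a basis of $T_1$. \cref{prop:tnequiv} then shows that $J_A$, which is the transpose of the matrix $\begin{bmatrix}\inp{\ba_i}{\bb_j}\end{bmatrix}$ there, has full row rank if and only if $T_1+T_2 = \msym$. That condition is exactly the SSP by \cite[Proposition~1.1]{bifur}.

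\textbf{Main obstacle.} I expect the only delicate point to be bookkeeping of inner products. The gradient $\nabla s_k$ is taken with the $(i,j)$ and $(j,i)$ entries treated as separate variables. The derivative along an off-diagonal free entry is therefore $\inp{\nabla s_k}{E_{e_j}}$ with $E_{e_j}$ having two ones, and this is consistent because $\nabla s_k$ is symmetric. One must also check that $\inp{\cdot}{\cdot}$ on $\mat$, restricted to $\msym$, is the inner product used to form $T_2^\perp$ within $\msym$. Both checks are routine once the symmetry of $\nabla s_k$ is established.
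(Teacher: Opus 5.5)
Your proposal is correct and follows essentially the same route as the paper's proof. You split on whether the eigenvalues are distinct, use \cref{lem:gradientind} for the (in)dependence of $\{\nabla s_k\}$, identify $\{\nabla s_k\}$ as a basis of $T_2^\perp$ in $\msym$, and conclude with \cref{prop:tnequiv}. The paper takes as known both the orthogonality $\nabla s_k\perp T_2$ (via the trajectory $e^{-tK}Ae^{tK}$) and the count $\dim T_2^\perp = n$ (via the commutant of $A$); you verify them explicitly, along with the transpose orientation needed to apply \cref{prop:tnequiv}.
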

\begin{proof}
We consider two cases.  Firstly, when the eigenvalues of $A$ contain repeated elements, the dimension of the span of $\{I, A, \ldots, A^{n-1}\}$ is less than $n$, so $\{\nabla s_1, \ldots, \nabla s_n\}$ is a linearly dependent set by \cref{lem:gradientind}.  Therefore, the rows of $J_A$ are dependent and $J_A$ does not have full row rank.  

Secondly, when the eigenvalues of $A$ are distinct, $\{\nabla s_1, \ldots, \nabla s_n\}$ is a linearly independent set by \cref{lem:gradientind}.  Since $A$ is symmetric, each $\nabla s_k$ is also symmetric. Since $T_2$ is known to have codimension $\dim(T_2^\perp) = n$, with respect to the ambient space $\msym$.  Therefore, $\{\nabla s_1, \ldots, \nabla s_n\}$ is a basis for $N_2 = T_2^\perp$.  On the other hand, $\{E_{e_j}\}$ for nonzero entries $e_j$ is a basis of $T_1$.  The result now follows from \cref{prop:tnequiv}.
\end{proof}

\begin{example}
\label{ex:a3jac}
Consider the matrix 
\[
    A = \begin{bmatrix}
        a & b & c \\
        b & d & e \\
        c & e & f
    \end{bmatrix}.
\]
By direct computation, we have 
\[
    \begin{aligned}
        s_1 &= a + d + f, \\
        s_2 &= ad - b^2 + df - e^2 + af - c^2, \\
        s_3 &= adf + bec + ceb - c^2d - b^2f - ae^2
    \end{aligned}
\]
and the Jacobian matrix $\begin{bmatrix}\frac{d (s_1, \ldots, s_3)}{d (a,b,c,d,e)}\end{bmatrix}$ with respect to every entry is 
\[
    \begin{array}{c|cccccc}
        ~ & a & b & c & d & e & f \\
        \hline
        s_1 & 1 & 0 & 0 & 1 & 0 & 1 \\
        s_2 & d+f & -2b & -2c & a+f & -2e & a+d \\
        s_3 & df-e^2 & 2ec-2bf & 2be-2cd & af-c^2 & 2bc-2ae & ad-b^2
    \end{array}
\]
Note that the columns of the Jacobian matrix corresponding to the off-diagonal entries $b,c,e$ are associated with a factor $2$ since the variable appears in $A$ twice.  Also, note that for a given matrix $A$ with numerical values, $J_A$ is a submatrix of the above mentioned Jacobian matrix obtained by selecting the columns corresponding to the diagonal entries and the entries corresponding to edges. 
\end{example}

\begin{example}
\label{ex:a3gradient}
Using the same matrix $A$ as in \cref{ex:a3jac}, we may compute $(A - xI)\cof$ as 
\[
    \begin{bmatrix}
        (d-x)(f-x)-e^2 & -(b(f-x)-ec) & be-(d-x)c \\
        -(b(f-x)-ec) & (a-x)(f-x)-c^2 & -((a-x)e-bc) \\
        be-(d-x)c & -((a-x)e-bc) & (a-x)(d-x)-b^2
    \end{bmatrix},
\]
which is equal to
{\footnotesize
\[
    (-x)^2\begin{bmatrix}
        1 & 0 & 0 \\
        0 & 1 & 0 \\
        0 & 0 & 1
    \end{bmatrix} +  
    (-x)^1\begin{bmatrix}
        d+f & -b & -c \\
        -b & a+f & -e \\
        -c & -e & a+d
    \end{bmatrix} + 
    (-x)^0\begin{bmatrix}
        df-e^2 & ec-bf & be-cd \\
        ec-bf & af-c^2 & bc-ae \\
        be-cd & bc-ae & ad-b^2
    \end{bmatrix}.
\]
}
The three coefficient matrices are $\nabla s_1, \nabla s_2, \nabla s_3$, respectively, and one may observe that the Jacobian matrix computed in \cref{ex:a3jac} can be obtained from these matrices by recording the entries and multiplying by a factor $2$ for each off-diagonal entry.
\end{example}

\section{Concluding Remarks}
\label{sec:concluding_remarks}

In this paper, we introduce the strong property, SSPWL, for the weighted Laplacian matrix.  This is the first time that a strong property is considered on a specific ambient space $\msymz$, other than the standard spaces $\msym$ or $\mat$.  The Supergraph lemma and the Bifurcation lemma are established in order to study the inverse eigenvalue problem for Laplacian matrices of a graph.  

In parallel with studying the SSPWL, the Jacobian Method is established as an equivalent condition for SSPWL, provided that the eigenvalues are all distinct.  This provides an alternative method for verifying the SSPWL.  For sparse graphs, this method is, in general, easier than checking the SSPWL by definition.  In particular, for trees, the Jacobian Method and the SSPWL are exactly the same, and the Jacobian matrix is of order $n-1$, while the verification matrix of SSPWL is of order $\binom{n-1}{2}$.  

Using these tools, we detect the potential boundaries for the set of spectra given by $\mptn_L(G)$ when $G$ is a connected graph with $4$ vertices.  In \cite{IEPL}, the authors suspected that the boundaries for $\Paw$, $C_4$, $K_4 - e$ are linear, and our computations support this claim.

The study of the potential spectra also leads to the solution of the absolute algebraic connectivity introduced in \cite{Fiedler90}.  The absolute algebraic connectivity of a graph $G$ with $m$ edges, denoted by $\hat{a}(G)$, is defined as the supremum of $\lambda_2$ among matrices $A\in\mptn_L(G)$ with $\tr(A) = 2m$.  That is, it is the right-most points in each picture in \cref{fig:g4-weak}, so the absolute algebraic connectivity is solved for these graphs.  In \cite{Fiedler90}, the absolute algebraic connectivity for trees is considered, and the basic properties of the matrix that achieve the absolute algebraic connectivity as a simple eigenvalue are studied.  With the simulation in \cref{fig:g4-weak} and the boundary formulas we have derived, we see that $\hat{a}(\Paw) = \frac{8}{4 + \sqrt{3}}$, $\hat{a}(C_4) = 2$, and $\hat{a}(K_4 - e) = \frac{5}{2}$, where these graphs are not trees and the absolute algebraic connectivity for each of them is not simple.  More precisely, the combinatorial Laplacian matrix $A\in\mptn_L(C_4)$ is an optimal matrix for $C_4$ with $\lambda_2 = 2$, while $\frac{10}{8}A$ is a limit point of $\mptn_L(K_4 - e)$ and is an optimal matrix for $K_4 - e$ with $\lambda_2 = \frac{5}{2}$.  For $\Paw$, one may trace back through the proof in \cite[Theorem~3.13]{IEPL} and find the unique optimal matrix
\[
    \frac{9 + \sqrt{3}}{78}
    \begin{bmatrix}
        12 & -12 & 0 & 0 \\
        -12 & 28 & -8 & -8 \\
        0 & -8 & 16 -4\sqrt{3} & -8 + 4\sqrt{3} \\
        0 & -8 & -8 + 4\sqrt{3} & 16 -4\sqrt{3}
    \end{bmatrix}
    \in \mptn_L(\Paw)
\]
with $\lambda_2 = \frac{8}{4 + \sqrt{3}}$.  We further point out that the absolute algebraic connectivity has to be achieved by a weak matrix according to the contrapositive statement of the Bifurcation lemma (\cref{lem:bifurcation}).

\section{Acknowledgments}
Thanks to the University of Regina for hosting M.~Catral while on a Faculty Development Research sabbatical from Xavier University, with travel  support from  an AMS-Simons Research Enhancement Grant for Primarily Undergraduate Institution (PUI) Faculty.
S. Fallat's research was supported in part by NSERC research grant RGPIN 2025-05272. H.~Gupta acknowledges partial support from the PIMS (Pacific Institute for the Mathematical Sciences) Postdoctoral Fellowship.
J. C.-H. Lin was supported by the National Science and Technology Council of Taiwan (NSTC-113-2115-M-110-010-MY3).

\end{document}